\newcommand{\be}{\begin{eqnarray}}
\newcommand{\ben}{\begin{eqnarray*}}
\newcommand{\en}{\end{eqnarray}}
\newcommand{\enn}{\end{eqnarray*}}
\newcommand{\R}{\mathbb{R}}
\newcommand{\Z}{\mathbb{Z}}
\newcommand{\C}{\mathbb{C}}
\definecolor{rot}{rgb}{1.000,0.000,0.000}
\definecolor{blue}{rgb}{0.000,0.000,1.000}
\setlist[enumerate]{leftmargin=.5in}
\setlist[itemize]{leftmargin=.5in}
\crefname{hypothesis}{Hypothesis}{Hypotheses}
\title{Uniqueness in determining rectangular grating profiles with a single incoming wave (Part II): TM polarization case \thanks{Submitted to the editors DATE.
}
}
\author{Jianli Xiang\thanks{Three Gorges Mathematical Research Center, College of Science, China Three Gorges University, Yichang 443002, People's Republic of China
  (\email{xiangjianli@ctgu.edu.cn}).}
\and Guanghui Hu \thanks{Corresponding author: School of Mathematical Sciences and LPMC, Nankai University, Tianjin 300071, People's Republic of China (\email{ghhu@nankai.edu.cn}).}
}
\begin{document}

\maketitle

\begin{abstract}
  This paper is concerned with an inverse transmission problem for recovering the shape of a penetrable rectangular grating sitting on a perfectly conducting plate. We consider a general transmission problem with the coefficient $\lambda\neq 1$ which covers the TM polarization case. It is proved that a rectangular grating profile can be uniquely determined by the near-field observation data incited by a single plane wave and measured on a line segment above the grating. In comparision with the TE case ($\lambda=1$), the wave field cannot lie in $H^2$ around each corner point, bringing essential difficulties in proving uniqueness  with one plane wave. Our approach relies on singularity analysis for Helmholtz transmission problems in a right-corner domain and also provides an alternative idea for treating the TE transmission conditions which were considered in the authors' previous work [Inverse Problem, 39 (2023): 055004.]
  \end{abstract}

\begin{keywords}
  inverse scattering, penetrable rectangular grating, uniqueness, transmission conditions, TM polarization case.
\end{keywords}

\begin{AMS}
  35P25, 35R30, 78A46, 81U40.
\end{AMS}

\section{Introduction and main result}\label{sec2}

Consider the time-harmonic electromagnetic scattering of a plane wave from a penetrable rectangular grating which remains invariant along one surface direction $x_3$. The diffractive grating is supposed to sit on the perfectly conducting substrate $x_2<0$. In TE and TM polarization cases, the wave scattering can be modeled by a transmission problem for the Helmholtz equation over the $ox_1x_2$-plane with a boundary condition on $x_2=0$ and an appropriate radiation condition as $x_2\rightarrow\infty$.
In this paper the medium above the grating profile is supposed to be isotropic and homogeneous.
For rectangular gratings,  the cross-section $\Lambda$ of the grating surface in the $ox_1x_2$-plane consists of line segments that are perpendicular to either the $x_1$ or  $x_2$-axis. More precisely, we define a set $\mathcal{A}$ of the so-called rectangular grating profiles
 by (see Figure \ref{fig0})
\begin{align*}
\mathcal{A}=\big\{\Lambda~|~&\Lambda\mbox{ is a non-self-intersecting  curve in $\mathbb{R}_{+}^{2}$ which is $2\pi$-periodic  in $x_1$, } \\ &\mbox{$\Lambda$ is piecewise linear and any linear part is parallel to the $x_{1}$- or $x_{2}$-axis} \big\}.
\end{align*}
Note that $\Lambda\in \mathcal{A}$ cannot contain any crack, for instance, a line segment intersecting the other part of $\Lambda$  at one ending point. The rectangular gratings defined above include the class of binary gratings, whose grooves have the same height. Denote by $\Omega_{\Lambda}^{+}$ the unbounded periodic domain above $\Lambda$, that is, the component of $\mathbb{R}^{2}_+$ separated by $\Lambda$ which is connected to $x_{2}=+\infty$. Let $\Omega_{\Lambda}^{-}$ be the periodic domain below $\Lambda$ but above the substrate $x_2=0$. Let $\nu=(\nu_1, \nu_2)\in \mathbb{S}:=\{x\in \mathbb{R}^2: |x|=1\}$ be the normal direction at $\Lambda$ pointing into $\Omega^+_\Lambda$. 
Suppose that a plane wave in the $(x_{1},x_{2})$-plane given by
\begin{equation*}
u^{i}(x_{1},x_{2})=e^{i\alpha x_{1}-i\beta x_{2}}, \quad \alpha=k_1\sin\theta, \quad \beta=k_1\cos\theta
\end{equation*}
with some incident angle $\theta\in(-\pi/2, \pi/2)$ and wave number $k_1>0$, is incident upon the grating $\Lambda$ from the top. Consider a general transmission problem  for finding the total field $u=u(x_{1},x_{2})$ such that
\begin{equation} \label{a}
\left\{\begin{array}{lll}
 \Delta u+k_1^{2}u=0, & \quad \mbox{in} \quad \Omega_{\Lambda}^{+}, \vspace{0.2cm} \\
 \Delta u+k_2^2 u=0,& \quad \mbox{in} \quad \Omega_{\Lambda}^{-}, \vspace{0.2cm} \\
 u^+=u^-, \quad \partial_\nu^+u=\lambda\,\partial_\nu^-u,  & \quad \mbox{on} \quad \Lambda, \vspace{0.2cm} \\
 u=u^{i}+u^{s},& \quad \mbox{in} \quad \Omega_{\Lambda}^{+},\vspace{0.2cm}\\
 \partial_\nu u=0,& \quad \mbox{on} \quad \Gamma_0,
 \end{array}\right.
\end{equation}
with the following radiation condition as $x_{2}\rightarrow+\infty$:
\begin{equation} \label{rad1}
u^{s}(x):=u-u^i=\sum_{n\in\mathbb{Z}}A_{n}~e^{i\alpha_{n}x_{1}+i\beta_{n}x_{2}}\qquad \mbox{in}~~x_{2}>\Lambda^{+}:=\max_{(x_{1},x_{2})\in\Lambda}x_{2}.
\end{equation}
In \eqref{a}, we have $k_j>0$ for $j=1,2$, $k_1\neq k_2$, $\lambda>0, \lambda\neq 1$,  $\alpha_{n}:=n+\alpha$ and
\ben
\beta_{n}:=\left\{\begin{array}{lll}
\sqrt{k_1^{2}-\alpha_{n}^{2}}\quad &\mbox{if}~~|\alpha_{n}|\leq k_1,\vspace{0.3cm}\\
i\sqrt{\alpha_{n}^{2}-k_1^{2}} \quad &\mbox{if}~~|\alpha_{n}|> k_1.
\end{array}\right.
\enn
The notation $(\cdot)^\pm$ stand for the limits of $u$ and $\partial_\nu u$ on $\Lambda$ obtained from above $(+)$ or below $(-)$ and $\Gamma_{H}=\{(x_{1},H): 0<x_{1}<2\pi\} $ for $H\in \R$. Note that the TM polarization case corresponds to the special  case that $\lambda=(k_1/k_2)^2$.
The expansion in (\ref{rad1}) is the well-known Rayleigh expansion (see e.g. \cite{Hettlich1997,Ray1907,Petit1980}), $A_{n}\in\mathbb{C}$ are called Rayleigh coefficients. 
The series \eqref{rad1} together with their derivatives are uniform convergent in any compact set in $x_2>\Lambda^+$, because
$u\in H_\alpha^1(S_H)$ (see below for the definition) and
the scattered fields consist of infinitely many surface waves which exponentially decay as $x_2\rightarrow+\infty$.
\begin{figure} \label{fig0}
  \centering
  \begin{tikzpicture}[scale=0.75]
  \draw[gray, thick,->] (0,-2) -- (0,4); \draw (0.1,4.2) node{$x_2$};
  \draw[gray, thick,->] (-2,0) -- (8.6,0); \draw (8.9,0) node{$x_1$};
  \filldraw[black] (0,0) circle (0.5pt);\draw (-0.2,0.2) node{$O$};
  \draw[gray, thick] (0,3) -- (1,3)-- (1,0.5) -- (2,0.5) -- (2,2.5) --(3,2.5) --(3,1.3) --(4,1.3) --(4,3)--(5,3)--(5,0.5)--(6,0.5)--(6,2.5)--(7,2.5)--(7,1.3)--(8,1.3)--(8,3);
  \draw[gray, thick, dashed] (4,-0.5) -- (4,3.5); \draw (4,-0.8) node{$x_{1}=2\pi$};
  \draw[gray, thick, dashed] (8,-0.5) -- (8,3.5); \draw (8,-0.8) node{$x_{1}=4\pi$};
  \draw[gray, thick, dashed] (0,3.3) -- (4,3.3); \draw (2,3.3) node{$\Gamma_{H}$};
  \draw (2.5,2.65) node{$\Lambda$}; \draw (3,3.8) node{$\Omega_{\Lambda}^{+}$};
  \draw (3,0.5) node{$\Omega_{\Lambda}^{-}$}; \draw (2.5,0) node{$\Gamma_{0}$};
  \end{tikzpicture}
  \caption{Rectangular periodic structures.}
\end{figure}
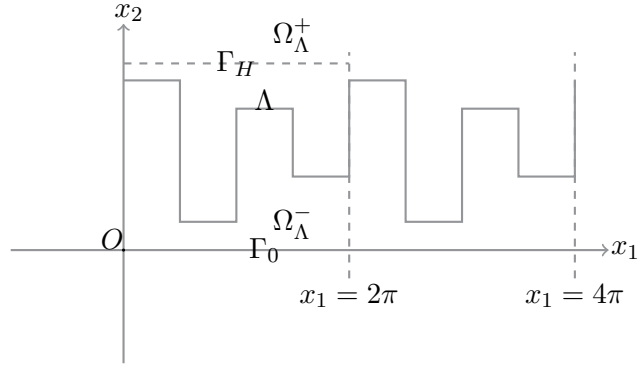
We will look for weak solutions to \eqref{a}--\eqref{rad1} in the $\alpha$-quasiperiodic Sobolev space
\begin{equation*}
H^1_\alpha(S_H):=\big\{u\in H^1_{{\rm loc}}(S_H),~e^{-i\alpha x_1}u\;\mbox{is $2\pi$-periodic in $x_1$}\big\}, 
\end{equation*}
with $S_H:=\{x\in \mathbb{R}^2: 0<x_2<H\}$ for any $H>\Lambda^+$.
Note that, since we are interested in quasi-periodic solutions, the notations $\Omega_{\Lambda}^{\pm}, \Lambda, S_{H}$ and $\Gamma_{H}$ always denote the corresponding sets in one periodicity cell $0<x_{1}<2\pi$. Uniqueness, existence and regularity results on solutions to the forward scattering problem  will be summarized as follows.
\begin{proposition}\label{prop}
\begin{description}
\item[(i)] There exists at least one solution $u\in H_\alpha^1(S_H)$ to the forward scattering problem (\ref{a})--(\ref{rad1}), where $H>\Lambda^+$ is arbitrary. Moreover, uniqueness holds true if $k_1^2\geq \lambda k_2^2$.
\item[(ii)] Let $u\in H_\alpha^1(S_H)$ be a solution to the forward scattering problem (\ref{a})--(\ref{rad1}) corresponding to some rectangular grating $\Lambda\in\mathcal{A}$. Then we have $u\in H_\alpha^{1+s}(S_H)\cap H^2_\alpha(S_H^\pm)$ for any $s\in [0, 1/2)$, where $S_H^\pm:=S_H\cap \Omega_\Lambda^\pm$. Moreover, $u$ is real-analytic on $\overline{S_H^+}$ and $\overline{S_H^-}$ except at the finite number of corner points of $\Lambda$.
\end{description}
\end{proposition}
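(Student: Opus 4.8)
The plan is to split the argument into the three claims: (a) existence of a weak $H^1_\alpha$ solution, (b) uniqueness under $k_1^2\geq\lambda k_2^2$, and (c) the improved regularity $H^{1+s}_\alpha(S_H)\cap H^2_\alpha(S_H^\pm)$ together with real-analyticity away from corners. For \textbf{(a)} I would reduce the unbounded periodic problem \eqref{a}--\eqref{rad1} to a bounded problem on $S_H$ for some fixed $H>\Lambda^+$ by introducing the exterior Dirichlet-to-Neumann (DtN) operator $T$ associated with the Rayleigh expansion \eqref{rad1}: for $\phi\in H^{1/2}_\alpha(\Gamma_H)$ with Fourier coefficients $\hat\phi_n$, set $(T\phi)(x_1)=\sum_{n}i\beta_n\hat\phi_n e^{i\alpha_n x_1}$. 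One then writes the variational formulation over $H^1_{\alpha,\Gamma}(S_H):=\{u\in H^1_\alpha(S_H): \partial_\nu u=0 \text{ weakly on } \Gamma_0\}$ (Neumann condition on $\Gamma_0$ is natural, so no essential trace constraint there), with the transmission condition $\partial_\nu^+u=\lambda\,\partial_\nu^-u$ also built into the sesquilinear form via the jump in the flux. Splitting $T=T_0 + (T-T_0)$ where $T_0$ is the coercive principal part (the finitely many propagating modes give a compact perturbation), the form satisfies a Gårding inequality; existence then follows from Fredholm theory once one checks injectivity is not needed for existence but only that the index is zero — alternatively one invokes the abstract result that for this class of grating transmission problems a solution always exists (this is standard, cf.\ the periodic-scattering literature the paper cites). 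The Neumann boundary on $\Gamma_0$ should be handled by even reflection across $x_2=0$ or simply absorbed in the weak form.

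For \textbf{(b)}, the uniqueness under $k_1^2\geq\lambda k_2^2$ is a Rellich/energy argument. Assume $u^i\equiv 0$ so $u=u^s$; multiply the first equation by $\bar u$, integrate over $S_H^+$, the second by $\lambda\bar u$ over $S_H^-$, add them, and use the transmission conditions $u^+=u^-$, $\partial_\nu^+u=\lambda\partial_\nu^-u$ so that the interface terms on $\Lambda$ cancel exactly; the Neumann condition on $\Gamma_0$ kills that boundary term. One is left with a balance on $\Gamma_H$ involving $\int_{\Gamma_H}\bar u\,\partial_{x_2}u$, whose imaginary part, by Parseval applied to the Rayleigh expansion, equals $\sum_{|\alpha_n|\le k_1}\beta_n|A_n|^2\ge 0$ with real part sign-definite under the stated inequality. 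Taking imaginary parts forces all propagating Rayleigh coefficients $A_n=0$; the evanescent part then vanishes by unique continuation from $x_2>\Lambda^+$ down into $\Omega^+_\Lambda$, after which $u=0$ in $\Omega^+_\Lambda$, hence (by the transmission data on $\Lambda$) $u$ has zero Cauchy data on $\Lambda$ from inside $\Omega^-_\Lambda$ and again unique continuation gives $u\equiv0$ there. The sign condition $k_1^2\geq\lambda k_2^2$ is precisely what makes the combined energy identity yield a definite sign.

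For \textbf{(c)}, interior real-analyticity on $S_H^\pm$ away from $\Lambda$ is immediate since $u$ solves a Helmholtz equation with constant coefficient there; analyticity up to the smooth (straight) parts of $\Lambda$ and up to $\Gamma_0$ follows from elliptic regularity for transmission/Neumann problems with analytic data and flat interface (reflection across the flat pieces). The only issue is at the finitely many corner points of $\Lambda$, which are all right angles. There I would localize, map a neighborhood of a corner to a model right-angle transmission wedge, and invoke the known regularity theory for the Laplace/Helmholtz transmission problem in a corner: the solution splits into a regular $H^2$ part plus a finite sum of corner singularity functions $r^{\mu_j}\Phi_j(\varphi)$ where the exponents $\mu_j$ are the roots of a transcendental equation depending on $\lambda$ and the opening angle. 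For $\lambda=1$ all relevant exponents exceed $1$ so $u\in H^2$ locally; for $\lambda\ne1$ the smallest exponent satisfies $1/2<\mu_1<1$ in general, giving $u\in H^{1+s}$ for every $s<\mu_1$ but not $H^2$ at the corner — consistent with the $s\in[0,1/2)$ claim and with the abstract remark in the abstract about loss of $H^2$ regularity. I expect this corner analysis to be \emph{the main obstacle}: one must verify that no exceptional exponent lands exactly on $1/2$ (which would destroy the $H^{1+s}$ statement) and control the finitely many logarithmic-type resonant cases; the paper presumably carries this out in its "singularity analysis for Helmholtz transmission problems in a right-corner domain." The $H^{1+s}_\alpha(S_H)$ global statement then follows by patching: $H^2$ on each side away from corners, $H^{1+s}$ near each corner, and matching Cauchy data across $\Lambda$ so that the global Sobolev index is governed by the worst corner exponent.
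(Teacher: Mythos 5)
Your treatment of existence (a) matches the paper: DtN reduction to $S_H$, a variational form satisfying a G\r{a}rding-type inequality, and Fredholm theory (the paper additionally verifies the solvability condition by showing the right-hand side is orthogonal to $\mathrm{Ker}(\mathcal{L}^*)$, using the sign properties of $T$). The decisive problem is part (b). The energy identity you propose --- multiply by $\bar u$ on $S_H^+$ and by $\lambda\bar u$ on $S_H^-$ and add --- only yields, upon taking imaginary parts, that the \emph{propagating} Rayleigh coefficients vanish; its real part is $\int_{S_H}a|\nabla u|^2-a\kappa|u|^2\,\mathrm{d}x\le 0$, which is an indefinite quadratic form and is not made sign-definite by $k_1^2\ge\lambda k_2^2$. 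Killing the propagating modes does not force $u=0$ above the grating: the field can still be a nontrivial superposition of evanescent modes (this is exactly the guided/Bloch-wave obstruction the paper mentions), so your appeal to unique continuation ``from $x_2>\Lambda^+$ downward'' has nothing to continue from. The paper's actual argument is a Rellich identity: multiply by the multiplier $x_2\,\partial_2\bar u$ (not $\bar u$), integrate by parts over $S_H^{\pm}$ separately, and form the combination $I^++\lambda I^-$. This produces the boundary term $-\int_\Lambda\big[\lambda(\lambda-1)|\partial_\nu u^-|^2+(\lambda-1)|\partial_\tau u^-|^2+(k_1^2-\lambda k_2^2)|u|^2\big]\nu_2 x_2\,\mathrm{d}s-2\int_{S_H}a|\partial_2u|^2\,\mathrm{d}x+I_1=0$, and it is here --- together with the geometric fact that $\nu_2\ge0$ on a rectangular grating --- that the hypothesis $k_1^2\ge\lambda k_2^2$ is actually used, giving $\partial_2u\equiv0$ and then $u\equiv0$. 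Note also that justifying the trace terms in this identity requires the piecewise $H^{3/2+\epsilon}$ regularity of part (ii), so (ii) is not independent of (i) in the paper's logic.

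For part (c) your framework (localization, Kondratiev decomposition into a regular $H^2$ part plus corner singular functions $r^{\eta_j}\varphi_j(\theta)(\ln r)^{p_j}$) is the paper's, but you leave the crux as an expectation, and you misattribute the source of the $s<1/2$ bound. The paper computes the corner exponent explicitly from a $4\times4$ transmission system: $\cos(\pi\eta_1)=-\tfrac{\lambda^2+6\lambda+1}{2(\lambda+1)^2}\in(-1,-\tfrac12)$, hence $\eta_1>\tfrac23$, so the corner is \emph{not} what limits $s$ to $[0,1/2)$; that limitation comes from the flat parts of $\Lambda$, where (Theorem on the flat interface) the angular eigenvalues are integers, the solution is piecewise $H^2$ and analytic up to the interface, but the normal derivative jumps by the factor $\lambda\neq1$, capping the \emph{global} regularity at $H^{3/2-\epsilon}$. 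Your worry about an exponent landing exactly at $1/2$ is resolved by this explicit computation rather than by a genericity argument.
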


Uniqueness and existence of the above transmission problem have been sufficiently investigated in the literature by applying the Dirichlet-to-Neumann map; see e.g., \cite{Bao2001, BL, BS, ES98} in periodic structures. In particular, the uniqueness proof for rectangular gratings with the condition $k_1^2\geq \lambda k_2^2$ follows directly from the authors' previous paper \cite[Appendix]{XH}. If $k_1^2\geq \lambda k_2^2$ does not hold, guided bloch waves might exist and additional constraint should be imposed on the total field to ensure uniqueness; see the recent publication \cite{HK22} for a sharp radiation condition  derived from the limiting absorption principle under the Dirichlet boundary condition. The second assertion, which states smoothness of the solution around a corner point and  up to a flat interface, follows from standard elliptic regularity result for interface problems in a right-corner domain; see e.g., in \cite{ES98,K1967,KMR,MNP,Pe2001}. We refer to the Appendix of this paper for the proof of Proposition \ref{prop}.

Now we formulate the inverse problem with a single measurement data above the grating.
\begin{description}
  \item{(IP):} Let $H>\Lambda^{+}$ be a fixed constant and suppose $u=u(x_{1},x_{2})$ is a solution to the direct problem (\ref{a})--(\ref{rad1}). Given the transmission coefficient $\lambda>0$ $(\neq1)$ and the wavenumbers $k_1$ and $k_2$, determine the periodic interface $\Lambda\in\mathcal{A}$ from knowledge of the near-field data $u(x_{1},H)$ for all $0<x_{1}<2\pi$.
\end{description}
The main uniqueness result of this paper is stated as follows.
\begin{theorem} \label{Main}
Let $u_{1}$ and $u_{2}$ be solutions to the direct diffraction problem (\ref{a})--(\ref{rad1}) corresponding to $(\Lambda_{1},k_{1},k_{2},\lambda)$ and $(\Lambda_{2},k_{1},k_{2},\lambda)$, respectively. If
\begin{equation}
u_{1}(x_{1},H)=u_{2}(x_{1},H)\quad {\rm for~all~}x_{1}\in(0,2\pi),
\end{equation}
where $H>\max\{\Lambda_{1}^{+},\Lambda^{+}_{2}\}$ is a fixed constant, then $\Lambda_{1}=\Lambda_{2}$.
\end{theorem}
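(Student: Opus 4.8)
The plan is to proceed by contradiction, combining unique continuation, a geometric reduction exploiting the rectangular structure, and a local singularity analysis at a right-angle corner. \emph{First}, since $u_1(\cdot,H)=u_2(\cdot,H)$ and each $u_j-u^i$ has the Rayleigh expansion \eqref{rad1} in $x_2>\Lambda^+:=\max\{\Lambda_1^+,\Lambda_2^+\}$, comparing Fourier coefficients in $x_1$ shows that all Rayleigh coefficients coincide, hence $u_1\equiv u_2$ in $\{x_2>\Lambda^+\}$; as $u_1-u_2$ satisfies $\Delta(u_1-u_2)+k_1^2(u_1-u_2)=0$ wherever both fields solve the Helmholtz equation with wavenumber $k_1$, unique continuation gives $u_1\equiv u_2$ in $\Omega^*$, the unbounded connected component of $\Omega_{\Lambda_1}^+\cap\Omega_{\Lambda_2}^+$. \emph{Second}, suppose $\Lambda_1\neq\Lambda_2$. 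Using that $\Lambda_1,\Lambda_2\in\mathcal A$ are piecewise linear with axis-parallel pieces, one locates a corner point $P^*$ of one grating, say $\Lambda_1$, lying on $\partial\Omega^*$ and in the \emph{open} region $\Omega_{\Lambda_2}^+$, together with a small ball $B=B_\varepsilon(P^*)\subset\Omega_{\Lambda_2}^+$ such that $\mathcal S^+:=B\cap\Omega_{\Lambda_1}^+=B\cap\Omega^*$ and $\mathcal S^-:=B\cap\Omega_{\Lambda_1}^-$ are the two plane sectors (of openings $\pi/2$ and $3\pi/2$ in some order) cut out by the two axis-parallel segments $\gamma_1,\gamma_2\subset\Lambda_1$ meeting at $P^*$. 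Then $u_2$ is real-analytic in $B$ with $\Delta u_2+k_1^2u_2=0$, $u_1$ solves the transmission problem \eqref{a} across $\Lambda_1$ in $B$, and $u_1\equiv u_2$ on $\mathcal S^+$; in particular the transmission conditions give $u_1^-=u_2$ and $\partial_\nu^- u_1=\lambda^{-1}\partial_\nu u_2$ on $\gamma_1\cup\gamma_2$.

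The core of the proof is the \emph{third} step, where the hypothesis $\lambda\neq1$ is essential. By the singularity theory for the Helmholtz transmission problem in a right-corner domain, $u_1$ admits near $P^*$ a decomposition $u_1=u_{\mathrm{reg}}+\sum_j c_j\, r^{\gamma_j}\Phi_j(\theta)$ in polar coordinates $(r,\theta)$ centred at $P^*$, where $u_{\mathrm{reg}}$ is real-analytic in $B$, the $r^{\gamma_j}\Phi_j(\theta)$ are the corner singularity functions (with possible $r^{\gamma_j}\log r$ terms at resonant exponents), and the exponents $\gamma_j>0$ are the roots of the characteristic determinant obtained by imposing the transmission conditions on the angular profiles $\Phi_j$ (each solving $\Phi_j''+\gamma_j^2\Phi_j=0$ on each sector). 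A short computation shows $\Phi_j\not\equiv0$ on \emph{both} sectors, since a profile vanishing together with its derivative at an edge vanishes identically. For $\lambda=1$ all the $\gamma_j$ are integers, but for $\lambda\neq1$ the characteristic equation has non-integer roots $\gamma_j\in(1,2)$ — precisely the obstruction to $u_1$ being locally in $H^2$ (equivalently, analytic) near a corner in the TM case. Now restrict the decomposition to $\mathcal S^+$: there $u_1=u_2$ is real-analytic, so $\sum_j c_j r^{\gamma_j}\Phi_j(\theta)\big|_{\mathcal S^+}=u_2|_{\mathcal S^+}-u_{\mathrm{reg}}|_{\mathcal S^+}$ is real-analytic; since every genuinely singular term ($\gamma_j\notin\N$, or a $\log$ term) is non-analytic and $\Phi_j|_{\mathcal S^+}\not\equiv0$, all such coefficients $c_j$ must vanish. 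Hence $u_1\equiv u_{\mathrm{reg}}$ is real-analytic in the whole ball $B$.

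\emph{Finally}, $u_1$ is real-analytic in the connected open set $B$ while $\Delta u_1+k_1^2u_1=0$ on the open set $\mathcal S^+$ and $\Delta u_1+k_2^2u_1=0$ on the open set $\mathcal S^-$; applying the identity theorem for real-analytic functions to $\Delta u_1+k_1^2u_1$ and to $\Delta u_1+k_2^2u_1$ shows both equations hold throughout $B$, so $(k_1^2-k_2^2)u_1\equiv0$ in $B$, and since $k_1\neq k_2$ we conclude $u_1\equiv0$ in $B$. Therefore $u_2=u_1\equiv0$ on $\mathcal S^+$, and by analyticity (or unique continuation for $\Delta+k_1^2$ in $\Omega^*$) $u_2\equiv0$ in $\Omega^*$, hence for $x_2>\Lambda^+$. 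This contradicts \eqref{rad1}, whose $n=0$ Fourier mode would then require $e^{-i\beta x_2}+A_0e^{i\beta_0 x_2}\equiv0$ with $\beta_0=\beta=k_1\cos\theta>0$, which is impossible. Thus $\Lambda_1=\Lambda_2$. The main obstacle is the third step: establishing the corner singular decomposition with real-analytic regular part and non-integer exponents for $\lambda\neq1$ via a Mellin/characteristic-equation analysis in the right-corner domain, and checking that matching against the analytic field $u_2$ on $\mathcal S^+$ annihilates every singular coefficient. A secondary technical point is the geometric reduction in the second step, including the degenerate configurations in which corners of $\Lambda_1$ and $\Lambda_2$ coincide or a corner of one profile lies on a flat portion of the other.
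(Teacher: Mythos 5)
Your Step 1 and the final contradiction are fine and match the paper. However, there are two genuine gaps in the middle of your argument.

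First, the geometric reduction in your Step 2 does not always produce the configuration you need. You assume one can find a corner $P^*$ of one grating lying in the \emph{open} set $\Omega_{\Lambda_2}^+$, so that a full ball around $P^*$ avoids $\Lambda_2$. This fails precisely in the configurations you defer as ``secondary'': when all corners of $\Lambda_1$ and $\Lambda_2$ coincide but the profiles differ (grooves opening in opposite directions), and when a corner of one profile sits on a flat segment of the other. These are not minor technicalities --- the first of them is where the paper spends most of its effort (Lemma \ref{Lema2}), because there $u_1$ and $u_2$ are \emph{both} singular at the same corner and one must run a full induction on the asymptotic expansion in powers $r^{\eta_j}$ (with the $\log$-terms and the eigenvalue classification of Lemma \ref{Lema1}) to show that only even-integer exponents survive, before the Taylor-coefficient argument of Lemma \ref{Th1} can be applied. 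Your proposal contains no mechanism for this case.

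Second, and more fundamentally, your Step 3 rests on the claim that $u_1=u_{\mathrm{reg}}+\sum_j c_j r^{\gamma_j}\Phi_j(\theta)$ with $u_{\mathrm{reg}}$ \emph{real-analytic in the whole ball} $B$. The decomposition theorem only gives a remainder $\hat w\in H^2(\Omega_\ell)$ on each sector separately; it is not analytic (nor even $C^1$) across $\Lambda_1$, because the transmission condition $\partial_\nu^+u_1=\lambda\,\partial_\nu^-u_1$ with $\lambda\neq1$ forces a genuine jump of $\nabla u_1$ across the interface unless the normal derivative vanishes there. Indeed, even the integer-exponent eigenfunctions of \eqref{b} satisfy $B^+=\lambda B^-$ and so are not smooth across $\theta=0$. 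Consequently, killing the finitely many non-integer singular coefficients does not make $u_1$ analytic in $B$, and your concluding step ``$\Delta u_1+k_1^2u_1$ and $\Delta u_1+k_2^2u_1$ are analytic and vanish on open subsets, hence $(k_1^2-k_2^2)u_1\equiv0$'' never gets off the ground. (You also place the singular exponents in $(1,2)$; the first one is actually $\eta_1=\frac1\pi\arccos\bigl(-\frac{\lambda^2+6\lambda+1}{2(\lambda+1)^2}\bigr)\in(2/3,1)$.) The paper's mechanism for the corner-in-the-open-region case is different: since $u_1=u_2$ on the $3\pi/2$-sector and $u_2$ is analytic, the Cauchy data of $u_1^-$ on the two perpendicular segments are analytic, so by Cauchy--Kowalewski the field in the $\pi/2$-sector extends to an analytic solution of $\Delta w+k_2^2w=0$ in a full neighborhood; one then has two \emph{analytic} Helmholtz solutions with distinct potentials coupled by the transmission conditions on $\Pi_1\cup\Pi_2$, and the overdetermined recurrences on their Taylor coefficients (Lemma \ref{Th1}, using both $q_1\neq q_2$ and $\lambda\neq1$) force both to vanish. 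You would need to replace your Step 3 by an argument of this type.
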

It is well-known that a general grating profile cannot be uniquely determined by one plane wave in a lossless media. In the literature there are uniqueness results using many incoming waves of different kinds, for instance, quasiperiodic waves with the same phase-shift \cite{Kirsch1994}, fixed-direction multifrequency plane waves \cite{Hettlich1997} and fixed-frequency multi-direction plane waves \cite{XH23}. Binary gratings have very important applications in industry, because they can be easily fabricated \cite{SK97, Turunen1997}. The inverse problem of identifying parameters of binary gratings plays a major role in quality control and optimal design of diffractive elements with prescribed far field patterns \cite{Bao2001,D93,ES98}. In the authors' previous work \cite{XH}, a global uniqueness result in the TE polarization case (i.e., $\lambda=1$) was verified. The approach of \cite{XH} was based on the singularity analysis of an overdetermined Cauchy problem for an inhomogeneous Laplacian equation in a corner domain. If $\lambda\neq 1$, the wave field cannot lie in $H^2$ around each corner point. This weaker smoothness gives rise to essential difficulties in carrying out approach of \cite{XH} to the transmission conditions with $\lambda\neq 1$. The aim of this paper is to develop a different approach for proving Theorem \ref{Main}. Numerically, optimization-based iterative schemes are usually utilized for solving the inverse problem. One may conclude from Theorem \ref{Main} that the global minimizer of the object functional within the class of rectangular gratings is unique. The proof of Theorem \ref{Main} also implies that wave fields must be singular (that is, non-analytic) at the corner point.

\section{Preliminary lemmas}
The singularity analysis seems natural for justifying uniqueness  to inverse scattering from penetrable scatterers whose boundary contains corner points; see e.g. \cite{Elschner2018,ElschnerJ2015,XH} where the TE transmission conditions (i.e., $\lambda=1$) were considered. 
As will be seen later, the TM case appears quite different from the TE case.
In this section, we prepare several lemmas for the proof of Theorem \ref{Main}. They are mostly motivated by the papers \cite{Elschner2018,ElschnerJ2015,XH},
but are interesting on their own right. Throughout the whole paper, we let $(r,\theta)$ be the polar coordinates of $x=(x_{1},x_{2})$ in $\mathbb{R}^{2}$, and let $B_{R}$ denote the disk centered at origin with radius $R>0$.
The corner domains $\Omega_{\ell}$ and the line segments $\Pi_{\ell}$ ($\ell=1,2$) are defined as (see Figure \ref{f1}):
\ben
&&\Omega_{1}:=\{(r,\theta):0<r<R,~0<\theta<3\pi/2\}, \quad ~~\Pi_{1}:=\{(r,0):0\leq r\leq R\},\\
&&\Omega_{2}:=\{(r,\theta):0<r<R,~-\pi/2<\theta<0\},   
\quad\Pi_{2}:=\{(r,3\pi/2):0\leq r\leq R\}.  
\enn
\begin{figure}[h] \label{f1}
  \centering
  \begin{tikzpicture}[scale=0.75]  \filldraw[gray,opacity=0.5] (0,0)--(0,-2.5) arc (270:360:2.5);
  \filldraw[gray!30,opacity=0.5] (0,0)--(2.5,0) arc (0:270:2.5);
  \draw[gray, thick,->] (0,-3) -- (0,3.5); \draw (0.1,3.7) node{$x_2$};
  \draw[gray, thick,->] (-3,0) -- (3.5,0); \draw (3.8,0) node{$x_1$};
  \filldraw[black] (0,0) circle (0.5pt); \draw (-0.2,0.2) node{$O$}; \filldraw[black] (0,0) circle (0.8pt);
  \draw[gray,thick] (0,0) circle (2.5cm); \draw (2.2,2.2) node{$B_{R}$};
  \draw (1,-1) node{$\Omega_{2}$}; \draw (-1,1) node{$\Omega_{1}$};
  \draw[red, line width=1] (0,0) -- (2.5,0); \draw (1.25,0.2) node{$\Pi_{1}$};
  \draw[red, line width=1] (0,0) -- (0,-2.5); \draw (-0.25,-1.25) node{$\Pi_{2}$};
  \end{tikzpicture}
  \caption{Illustration of two domains $\Omega_{\ell}$ and two line segments  $\Pi_{\ell}$ ($\ell=1,2$).}
\end{figure}
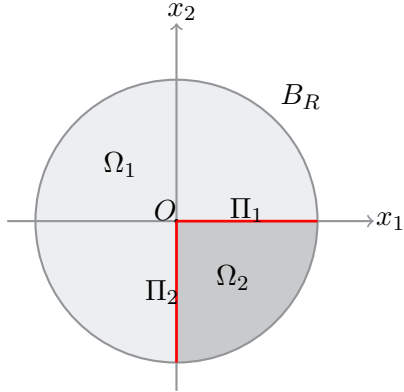

\begin{lemma} \label{Th1}
Let $q_{1}$ and $q_{2}$ be two constants in $B_{R}$ and let $\lambda$ be a positive constant. Suppose that $u_{1}$ and $u_{2}$ satisfy the Helmholtz equations
\begin{equation*}
\Delta u_{\ell}+q_{\ell}u_{\ell}=0\quad {\rm in}~B_{R},\quad \ell=1,2
\end{equation*}
subject to the transmission conditions
\begin{equation} \label{trans}
u_{1}=u_{2}, \quad \frac{\partial u_{1}}{\partial\nu}=\lambda\frac{\partial u_{2}}{\partial\nu} \quad {\rm on}~~\Pi_{1}\cup\Pi_{2}.
\end{equation}
If $q_{1}\neq q_{2}$ and $\lambda\neq1$, then $u_{1}=u_{2}\equiv0$ in $B_{R}$.
\end{lemma}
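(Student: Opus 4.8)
The plan is to argue by contradiction: assume $u_1 = u_2 \not\equiv 0$ and derive a contradiction from the mismatch between the two Helmholtz operators and the transmission jump $\lambda \neq 1$. The key is to expand both $u_\ell$ near the corner $O$. Since $u_\ell$ solves $\Delta u_\ell + q_\ell u_\ell = 0$ in $B_R$ and the corner subdomains $\Omega_1$ (angle $3\pi/2$) and $\Omega_2$ (angle $\pi/2$) are bounded by straight segments $\Pi_1, \Pi_2$, I would first establish (via the regularity/singular-expansion theory for transmission problems in corner domains, as invoked for Proposition~\ref{prop}(ii)) that each $u_\ell$, restricted to $\overline{\Omega_1}$ and $\overline{\Omega_2}$, admits an expansion in homogeneous terms. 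More efficiently: set $v := u_1 = u_2$ on $\Pi_1 \cup \Pi_2$ and $w := u_1 - u_2$ in $B_R$. Then $w$ satisfies $\Delta w = -q_1 u_1 + q_2 u_2 = -(q_1 - q_2)u_1 - q_2 w$, i.e. $\Delta w + q_2 w = -(q_1-q_2)u_1$ in $B_R$, with Cauchy data on $\Pi_1 \cup \Pi_2$ being $w = 0$ and $\partial_\nu w = \partial_\nu u_1 - \partial_\nu u_2 = (1 - 1/\lambda)\,\partial_\nu u_1$ (from the transmission condition $\partial_\nu u_1 = \lambda \partial_\nu u_2$). So $w$ vanishes on the two segments but its normal derivative there equals $(1 - \lambda^{-1})\partial_\nu u_1 \neq 0$ unless $\partial_\nu u_1 = 0$ there.

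Next I would use the expansion of $u_1$ (equivalently $v$) near $O$. Write, for $x$ in a neighborhood of $O$ inside each closed sector, $u_1 = \sum_{j} p_j(x) + (\text{smoother remainder})$, where $p_j$ is a homogeneous harmonic (or generalized) polynomial of degree $j$; the leading nonzero term $p_m$ is harmonic because the lower-order part of $\Delta u_1 + q_1 u_1 = 0$ is $\Delta p_m = 0$. Substituting into the equation $\Delta w + q_2 w = -(q_1 - q_2)u_1$ and matching the lowest-degree homogeneous parts shows that the leading term of $w$ has degree $m+2$: there is a homogeneous polynomial $P$ of degree $m+2$ with $\Delta P = -(q_1-q_2)p_m$, and $w = P + (\text{higher order})$ near $O$. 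The Cauchy conditions force: $p_m = 0$ on $\Pi_1 \cup \Pi_2$ and $P = 0$ on $\Pi_1 \cup \Pi_2$, while $\partial_\nu P = (1 - \lambda^{-1})\,\partial_\nu p_m$ on $\Pi_1 \cup \Pi_2$ — but $\partial_\nu p_m$ on these segments is the normal derivative of a degree-$m$ polynomial, while $\partial_\nu P$ is the normal derivative of a degree-$(m+2)$ polynomial; restricted to a segment through $O$ these are one-variable polynomials of degrees $m-1$ and $m+1$ respectively, so they can only agree if both vanish, i.e. $\partial_\nu p_m = 0$ on $\Pi_1 \cup \Pi_2$.

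Thus $p_m$ is a homogeneous harmonic polynomial of degree $m$ that, together with its normal derivative, vanishes on both half-lines $\{\theta = 0\}$ and $\{\theta = 3\pi/2\}$. A homogeneous harmonic polynomial of degree $m$ in two variables is a combination of $r^m \cos m\theta$ and $r^m \sin m\theta$; the conditions $p_m = \partial_\theta p_m = 0$ at $\theta = 0$ and at $\theta = 3\pi/2$ are four linear constraints on the two coefficients, and a short computation (using that the opening angle $3\pi/2$ is not an integer multiple of $\pi$ for any $m$, so $\sin(3m\pi/2)$ and $\cos(3m\pi/2)$ do not both vanish in a compatible way) forces both coefficients to be zero. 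Hence $p_m = 0$, contradicting that $p_m$ was the leading nonzero term — so $u_1$ vanishes to infinite order at $O$. Since $u_1$ is real-analytic near $O$ on each closed sector (Proposition~\ref{prop}(ii), or directly by elliptic analyticity away from the corner combined with the matching) and both sectors reach $B_R$, unique continuation gives $u_1 \equiv 0$ in $B_R$, whence $u_2 = u_1 - w$ and $\Delta w = 0$ with zero Cauchy data on the segments forces $w \equiv 0$ too, so $u_2 \equiv 0$.

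The main obstacle I anticipate is making the "leading term" argument rigorous given that $u_\ell \notin H^2$ at the corner when $\lambda \neq 1$: one must justify the existence of the homogeneous polynomial expansion with a quantified remainder in the right-angle/reflex-angle sector, controlling the non-$H^2$ singular exponents (which are of the form $r^{\mu}$ with possibly non-integer $\mu$) and verifying they cannot produce the cancellation above. I would handle this by appealing to the Kondrat'ev-type expansion cited for Proposition~\ref{prop}, isolating the possibly-singular exponents and checking that the transmission conditions on the straight segments still force the leading regular part to obey $p_m = \partial_\nu p_m = 0$, and that any genuinely singular term likewise cannot satisfy both homogeneous Cauchy conditions on two non-collinear segments.
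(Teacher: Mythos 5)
Your reduction to the difference $w=u_1-u_2$ and to Taylor expansions at the corner is a reasonable starting point (and note that in this lemma both $u_\ell$ solve constant-coefficient Helmholtz equations in \emph{all} of $B_R$, so they are real-analytic in the whole disk; the Kondrat'ev/$H^2$ worries in your last paragraph do not arise here). However, the pivotal step is unjustified and false as stated: you claim that the leading homogeneous term of $w$ has degree $m+2$ and that the Cauchy conditions force $p_m=0$ on $\Pi_1\cup\Pi_2$. Neither follows. The obstruction is that a homogeneous harmonic polynomial $r^{s}\sin(s\theta)$ with $s$ even vanishes identically on \emph{both} half-lines $\theta=0$ and $\theta=3\pi/2$, so the Dirichlet matching $w=0$ on $\Pi_1\cup\Pi_2$ does not prevent $w$ from carrying a nonzero harmonic term of degree $m$ (or of any even degree); your parenthetical appeal to ``the opening angle $3\pi/2$ is not an integer multiple of $\pi$'' misses exactly this, since $\sin(3s\pi/2)=0$ whenever $s$ is even. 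Concretely, at degree $2$ the transmission conditions only yield $b^{(1)}_{2,0}=\lambda\, b^{(2)}_{2,0}$ for the $\sin(2\theta)$-coefficients; these are not killed at their own order but only two orders later, at degree $4$, by combining the recurrence $b^{(\ell)}_{2,1}=-\tfrac{q_\ell}{12}b^{(\ell)}_{2,0}$ with the degree-$4$ Neumann matching and $q_1\neq q_2$. Likewise your claimed Dirichlet vanishing of $p_m$ on the segments fails at leading order for $m=0$: a nonzero constant $u_1(O)$ is perfectly consistent with the matching at orders $0$ and $1$ and is eliminated only through the degree-$2$ relations together with $q_1\neq q_2$. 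A one-shot ``leading term must vanish'' contradiction therefore does not exist.

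The paper's proof handles precisely this coupling: it expands both $u_\ell$ as $\sum r^{n+2m}\bigl(a^{(\ell)}_{n,m}\cos n\theta+b^{(\ell)}_{n,m}\sin n\theta\bigr)$ with the explicit recurrence in $m$, writes the four transmission relations degree by degree, and runs a double induction that treats odd and even angular frequencies separately; in the even case each coefficient is first pinned by a relation of the form $c^{(1)}=\lambda c^{(2)}$ (or $c^{(1)}=c^{(2)}$) at its own degree and only annihilated two degrees higher using $q_1\neq q_2$. To salvage your difference-function formulation you would need to reproduce this two-level interaction between degrees $j$ and $j+2$ explicitly rather than arguing from the leading term alone.
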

\begin{proof}
Recalling the Taylor expansion of analytic solutions of the Helmholtz equation (see \cite{ElschnerJ2015,EHY}), we have
\begin{equation*}
u_{\ell}(r,\theta)=\sum_{n,m\in \mathbb{N}:n+2m\geq0}r^{n+2m} \Big(a^{(\ell)}_{n,m}\cos(n\theta)+b^{(\ell)}_{n,m}\sin(n\theta)\Big),\quad {\rm for}~~0\leq r<R,
\end{equation*}
where the coefficients $a_{n,m}^{(\ell)}$ and $b_{n,m}^{(\ell)}$ fulfill the recurrence relations
\begin{equation} \label{recur1}
a^{(\ell)}_{n,m+1}=\frac{-q_{\ell}}{4(m+1)(n+m+1)}a^{(\ell)}_{n,m},~ b^{(\ell)}_{n,m+1}=\frac{-q_{\ell}}{4(m+1)(n+m+1)}b^{(\ell)}_{n,m},~\forall~n,m \in \mathbb{N}.
\end{equation}
The transmission conditions in (\ref{trans}) are equivalent to the four relations:
\begin{equation*}
\sum\limits_{n,m\in\mathbb{N}}^{n+2m=l}a^{(1)}_{n,m}=\sum\limits_{n,m\in\mathbb{N}}^{n+2m=l}a^{(2)}_{n,m},\quad \quad
\sum\limits_{n,m\in\mathbb{N}}^{n+2m=l}nb^{(1)}_{n,m}=\lambda\sum\limits_{n,m\in\mathbb{N}}^{n+2m=l}nb^{(2)}_{n,m},
\end{equation*}
\begin{align*}
\sum\limits_{n,m\in \mathbb{N}}^{n+2m=l}\!\!\!\big[a^{(1)}_{n,m}\cos(n\pi/2)-b^{(1)}_{n,m}\sin(n\pi/2)\big]
=\sum\limits_{n,m\in\mathbb{N}}^{n+2m=l}\!\!\!\big[a^{(2)}_{n,m}\cos(n\pi/2)-b^{(2)}_{n,m}\sin(n\pi/2)\big], \\
\sum\limits_{n,m\in\mathbb{N}}^{n+2m=l}n\big[a^{(1)}_{n,m}\sin(n\pi/2) +b^{(1)}_{n,m}\cos(n\pi/2)\big]
=\lambda\sum\limits_{n,m\in\mathbb{N}}^{n+2m=l}n\big[a^{(2)}_{n,m}\sin(n\pi/2) +b^{(2)}_{n,m}\cos(n\pi/2)\big].
\end{align*}

{\bf Case One}: $n=2\mathrm{k}+1$ for some $\mathrm{k}\in\mathbb{N}$. 
In this case the transmission conditions can be simplified to be
\begin{align}  \label{re1}
\left\{\begin{array}{lll}
 &\sum\limits_{2\mathrm{k}+1+2m=l}a^{(1)}_{2\mathrm{k}+1,m} =\sum\limits_{2\mathrm{k}+1+2m=l}a^{(2)}_{2\mathrm{k}+1,m},\vspace{0.2cm} \\
 &\sum\limits_{2\mathrm{k}+1+2m=l}(2\mathrm{k}+1)(-1)^{\mathrm{k}}\,a^{(1)}_{2\mathrm{k}+1,m}
 =\lambda\sum\limits_{2\mathrm{k}+1+2m=l}(2\mathrm{k}+1)(-1)^{\mathrm{k}}\,a^{(2)}_{2\mathrm{k}+1,m},
\end{array}\right.
\end{align}
\begin{align}  \label{re2}
\left\{\begin{array}{lll}
 &\sum\limits_{2\mathrm{k}+1+2m=l}(2\mathrm{k}+1)b^{(1)}_{2\mathrm{k}+1,m} =\lambda\sum\limits_{2\mathrm{k}+1+2m=l}(2\mathrm{k}+1)b^{(2)}_{2\mathrm{k}+1,m}, \vspace{0.2cm} \\
 &\sum\limits_{2\mathrm{k}+1+2m=l}(-1)^{\mathrm{k}}\,b^{(1)}_{2\mathrm{k}+1,m}
 =\sum\limits_{2\mathrm{k}+1+2m=l}(-1)^{\mathrm{k}}\,b^{(2)}_{2\mathrm{k}+1,m}.
\end{array}\right.
\end{align}
It suffices to show $a^{(\ell)}_{2\mathrm{k}+1,m}=b^{(\ell)}_{2\mathrm{k}+1,m}=0$ for all $\mathrm{k},m\in \mathbb{N}$, $\ell=1,2$.

We first consider the case: $l=2\mathrm{k}+1+2m=1$, that is $\mathrm{k}=0$, $m=0$. From \eqref{re1} and \eqref{re2} we deduce that
\begin{equation*}
a^{(1)}_{1,0}=a^{(2)}_{1,0}, \quad  a^{(1)}_{1,0}=\lambda a^{(2)}_{1,0}; \quad\quad  b^{(1)}_{1,0}=\lambda b^{(2)}_{1,0}, \quad b^{(1)}_{1,0}=b^{(2)}_{1,0}.
\end{equation*}
Since $\lambda\neq1$, we obtain $a^{(1)}_{1,0}=a^{(2)}_{1,0}=b^{(1)}_{1,0}=b^{(2)}_{1,0}=0$. By the recurrence relation (\ref{recur1}), we have $a^{(\ell)}_{1,m} =b^{(\ell)}_{1,m}=0$ for all $m\in \mathbb{N}$, $\ell=1,2$.

We carry out the proof by induction. Supposing for some $M\in \mathbb{N}$ that
\begin{equation}  \label{induc1}
a^{(1)}_{2\mathrm{k}+1,m}=a^{(2)}_{2\mathrm{k}+1,m}=0,\quad b^{(1)}_{2\mathrm{k}+1,m}=b^{(2)}_{2\mathrm{k}+1,m}=0, \quad {\rm for}~~\mathrm{k}\leq M,\quad \mathrm{k},m\in\mathbb{N}.
\end{equation}
We need to prove the above relations in (\ref{induc1}) with $M$ replaced by $M+1$. For this purpose, it is sufficient to verify
\begin{equation*}
a^{(1)}_{2M+3,0}=a^{(2)}_{2M+3,0}=0, \quad b^{(1)}_{2M+3,0}=b^{(2)}_{2M+3,0}=0.
\end{equation*}
Setting $l=2\mathrm{k}+1+2m=2M+3$ in (\ref{re1}) and (\ref{re2}) and using the relations in (\ref{induc1}), we obtain
\begin{equation*}
a^{(1)}_{2M+3,0}=a^{(2)}_{2M+3,0},\quad  a^{(1)}_{2M+3,0}=\lambda a^{(2)}_{2M+3,0};\quad\quad b^{(1)}_{2M+3,0}=\lambda b^{(2)}_{2M+3,0}, \quad
b^{(1)}_{2M+3,0}=b^{(2)}_{2M+3,0}.
\end{equation*}
Again using $\lambda\neq1$ yields $a^{(1)}_{2M+3,0}=a^{(2)}_{2M+3,0}=b^{(1)}_{2M+3,0}=b^{(2)}_{2M+3,0}=0$. Consequently, we achieve that $a^{(\ell)}_{2\mathrm{k}+1,m}=b^{(\ell)}_{2\mathrm{k}+1,m}=0$ for all $\mathrm{k},m\in\mathbb{N}$, $\ell=1,2$.

{\bf Case Two}: $n=2\mathrm{k}$ for $\mathrm{k}\in\mathbb{N}$. 
It then follows from the transmission conditions that
\begin{equation}  \label{re3}
\sum\limits_{2\mathrm{k}+2m=l}a^{(1)}_{2\mathrm{k},m} =\sum\limits_{2\mathrm{k}+2m=l}a^{(2)}_{2\mathrm{k},m},\quad\quad
 \sum\limits_{2\mathrm{k}+2m=l}(-1)^{\mathrm{k}}\,a^{(1)}_{2\mathrm{k},m}
=\sum\limits_{2\mathrm{k}+2m=l}(-1)^{\mathrm{k}}\,a^{(2)}_{2\mathrm{k},m},
\end{equation}
\begin{equation}  \label{re4}
\sum\limits_{2\mathrm{k}+2m=l}\mathrm{k}\,b^{(1)}_{2\mathrm{k},m}
=\lambda\sum\limits_{2\mathrm{k}+2m=l}\mathrm{k}\,b^{(2)}_{2\mathrm{k},m}, \quad\quad
\sum\limits_{2\mathrm{k}+2m=l}(-1)^{\mathrm{k}}\,\mathrm{k}\,b^{(1)}_{2\mathrm{k},m}
=\lambda\sum\limits_{2\mathrm{k}+2m=l}(-1)^{\mathrm{k}}\,\mathrm{k}\,b^{(2)}_{2\mathrm{k},m}.
\end{equation}

Suppose $\tilde{l}:=\mathrm{k}+m=0$, that is $\mathrm{k}=0$, $m=0$. From the relation (\ref{re3}), we obtain $a^{(1)}_{0,0}=a^{(2)}_{0,0}$.
Then we set $\tilde{l}=\mathrm{k}+m=1$ in (\ref{re3}) and (\ref{re4}), that is $\mathrm{k}=1$, $m=0$ or $\mathrm{k}=0$, $m=1$. This gives the relations $b^{(1)}_{2,0}=\lambda b^{(2)}_{2,0}$ and
\begin{equation*}
a^{(1)}_{2,0}+a^{(1)}_{0,1}=a^{(2)}_{2,0}+a^{(2)}_{0,1}, \quad  -a^{(1)}_{2,0}+a^{(1)}_{0,1}=-a^{(2)}_{2,0}+a^{(2)}_{0,1},
\end{equation*}
which imply that $a^{(1)}_{0,1}=a^{(2)}_{0,1}$ and $a^{(1)}_{2,0}=a^{(2)}_{2,0}$. Since $a^{(1)}_{0,0}=a^{(2)}_{0,0}$, $a^{(1)}_{0,1}=a^{(2)}_{0,1}$, $a^{(\ell)}_{0,1}= -\frac{q_{\ell}}{4}a^{(\ell)}_{0,0}$ and $q_{1}\neq q_{2}$, we obtain that
\begin{equation*}
a^{(1)}_{0,m}=a^{(2)}_{0,m}=0, \quad \forall ~m\in\mathbb{N}.
\end{equation*}

Set $\tilde{l}=\mathrm{k}+m=2$ in (\ref{re3}) and (\ref{re4}), that is $\mathrm{k}=2$, $m=0$ or $\mathrm{k}=1$, $m=1$ or $\mathrm{k}=0$, $m=2$, we have
\begin{equation*}
\left\{\begin{array}{lll}
  a^{(1)}_{4,0}+a^{(1)}_{2,1}=a^{(2)}_{4,0}+a^{(2)}_{2,1}, \vspace{0.2cm} \\
  a^{(1)}_{4,0}-a^{(1)}_{2,1}=a^{(2)}_{4,0}-a^{(2)}_{2,1},
\end{array}\right. \quad\quad \quad
\left\{\begin{array}{lll}
  2b^{(1)}_{4,0}+b^{(1)}_{2,1}=\lambda\big(2b^{(2)}_{4,0}+b^{(2)}_{2,1}\big), \vspace{0.2cm} \\
  2b^{(1)}_{4,0}-b^{(1)}_{2,1}=\lambda\big(2b^{(2)}_{4,0}-b^{(2)}_{2,1}\big),
\end{array}\right.
\end{equation*}
which lead to that
\begin{equation*}
a^{(1)}_{4,0}=a^{(2)}_{4,0},\quad a^{(1)}_{2,1}=a^{(2)}_{2,1}; \quad\quad  b^{(1)}_{4,0}=\lambda b^{(2)}_{4,0},\quad b^{(1)}_{2,1}=\lambda b^{(2)}_{2,1}.
\end{equation*}
Since $a^{(1)}_{2,0}=a^{(2)}_{2,0}$, $a^{(1)}_{2,1}=a^{(2)}_{2,1}$, $a^{(\ell)}_{2,1}=-\frac{q_{\ell}}{12}a^{(\ell)}_{2,0}$ and $q_{2}\neq q_{1}$, we conclude that
\begin{equation*}
a^{(1)}_{2,m}=a^{(2)}_{2,m}=0, \quad \forall~m\in\mathbb{N}.
\end{equation*}
Since $b^{(1)}_{2,0}=\lambda b^{(2)}_{2,0}$, $b^{(1)}_{2,1}=\lambda b^{(2)}_{2,1}$ and $b^{(\ell)}_{2,1}=-\frac{q_{\ell}}{12}b^{(\ell)}_{2,0}$, we arrive at
\begin{equation*}
0=b^{(1)}_{2,1}-\lambda b^{(2)}_{2,1}=-\frac{q_{1}}{12}b^{(1)}_{2,0} +\lambda\frac{q_{2}}{12}b^{(2)}_{2,0}=\lambda\frac{q_{2}-q_{1}}{12}b^{(2)}_{2,0}.
\end{equation*}
That is $b^{(2)}_{2,0}=0$ for $q_{2}\neq q_{1}$, $\lambda\neq0$. By the recurrence relation (\ref{recur1}), we conclude
\begin{equation*}
b^{(1)}_{2,m}=b^{(2)}_{2,m}=0, \quad \forall~m\in\mathbb{N}.
\end{equation*}

We shall finish the proof by induction. Supposing for some $M\in \mathbb{N}$ that
\begin{equation} \label{induc2}
a^{(1)}_{2\mathrm{k}-2,m}=a^{(2)}_{2\mathrm{k}-2,m}=0,\quad a^{(1)}_{2M,0}=a^{(2)}_{2M,0}, \quad {\rm for}~~1\leq \mathrm{k}\leq M,\quad m\in\mathbb{N};
\end{equation}
\begin{equation}  \label{induc3}
b^{(1)}_{2\mathrm{k}-2,m}=b^{(2)}_{2\mathrm{k}-2,m}=0,\quad b^{(1)}_{2M,0}=\lambda b^{(2)}_{2M,0}, \quad {\rm for}~~1\leq \mathrm{k}\leq M,\quad m\in\mathbb{N}.
\end{equation}
We need to prove all relations in (\ref{induc2}) and (\ref{induc3}) with $M$ replaced by $M+1$. For this purpose, it is sufficient to verify
\begin{equation*}
a^{(1)}_{2M,0}=a^{(2)}_{2M,0}=0, \quad a^{(1)}_{2(M+1),0}=a^{(2)}_{2(M+1),0};\quad\quad
b^{(1)}_{2M,0}=b^{(2)}_{2M,0}=0, \quad b^{(1)}_{2M+2,0}=\lambda b^{(2)}_{2M+2,0}.
\end{equation*}
Setting $\tilde{l}=\mathrm{k}+m=M+1$ in (\ref{re3}) and using (\ref{induc2}), we obtain
\begin{equation*}
a^{(1)}_{2(M+1),0}+a^{(1)}_{2M,1}=a^{(2)}_{2(M+1),0}+a^{(2)}_{2M,1}, \quad\quad a^{(1)}_{2(M+1),0}-a^{(1)}_{2M,1}=a^{(2)}_{2(M+1),0}-a^{(2)}_{2M,1}.
\end{equation*}
That is, $a^{(1)}_{2(M+1),0}=a^{(2)}_{2(M+1),0}$ and $a^{(1)}_{2M,1}=a^{(2)}_{2M,1}$. Since $a^{(1)}_{2M,1}=a^{(2)}_{2M,1}$, $a^{(1)}_{2M,0}=a^{(2)}_{2M,0}$, $a^{(\ell)}_{2M,1}=\frac{-q_{\ell}}{4(2M+1)}a^{(\ell)}_{2M,0}$ and $q_{1}\neq q_{2}$, it follows that
$a^{(1)}_{2M,0}=a^{(2)}_{2M,0}=0$. Similarly, setting $\tilde{l}=\mathrm{k}+m=M+1$ in (\ref{re4}) and using (\ref{induc3}) will lead to
$b^{(1)}_{2(M+1),0}=\lambda b^{(2)}_{2(M+1),0}$ and 
$b^{(1)}_{2M,0}=b^{(2)}_{2M,0}=0$. 
\end{proof}

In our uniqueness proof, we need a weak version of Lemma \ref{Th1}, which is stated below.
\begin{lemma} \label{Th2}
Suppose $\rho_{1}(r,\theta)\equiv0$ in $\Omega_{1}$ and  $\rho_{1}(r,\theta)\equiv \rho\in \C, \rho\neq 0$ in $\Omega_{2}$. Let $v_{1}$, $v_{2}$ be solutions to
\begin{align*}
\Delta v_{1}+k^{2}(1+\rho_{1})v_{1}=0,\quad
\Delta v_{2}+k^{2}v_{2}=0\quad {\rm in}~B_{R},
\end{align*}
subject to the transmission conditions \eqref{trans}.
Then $v_{1}=v_{2}\equiv0$ in $B_{R}$.
\end{lemma}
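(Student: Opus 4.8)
The plan is to reduce Lemma~\ref{Th2} to Lemma~\ref{Th1} by peeling off the part of $v_1$ that is analytic across the relevant corner rays, so that what remains satisfies a constant-coefficient Helmholtz equation to which Lemma~\ref{Th1} applies. The difficulty is that $v_1$ solves $\Delta v_1 + k^2 v_1 = 0$ in $\Omega_1$ but $\Delta v_1 + k^2(1+\rho)v_1 = 0$ in $\Omega_2$, so $v_1$ is a \emph{single} function on $B_R$ and is not, as it stands, a solution of a Helmholtz equation with a constant potential in all of $B_R$. However, on each of the two sectors $\Omega_1$ and $\Omega_2$ separately, $v_1$ is an analytic solution of a constant-coefficient Helmholtz equation, hence extends real-analytically up to the boundary rays $\Pi_1$ and $\Pi_2$, and in fact (being a Helmholtz solution on a sector of opening $3\pi/2$ resp.\ $\pi/2$ with analytic Cauchy data on the bounding rays) extends analytically to a full neighbourhood of the corner. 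So the idea is: let $\widetilde v_1$ be the (unique) solution of $\Delta \widetilde v_1 + k^2(1+\rho)\widetilde v_1 = 0$ in $B_R$ which agrees with $v_1$ in $\Omega_2$ — equivalently, the analytic continuation of $v_1|_{\Omega_2}$ into all of $B_R$. Then in $\Omega_2$ we have $\widetilde v_1 = v_1 = v_1$ and $\partial_\nu \widetilde v_1 = \partial_\nu v_1$ on $\Pi_1\cup\Pi_2$, so the pair $(\widetilde v_1, v_2)$ satisfies $\Delta \widetilde v_1 + k^2(1+\rho)\widetilde v_1 = 0$, $\Delta v_2 + k^2 v_2 = 0$ in $B_R$ together with the transmission conditions \eqref{trans}. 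Since $\rho\neq 0$ gives $k^2(1+\rho)\neq k^2$, and $\lambda\neq 1$, Lemma~\ref{Th1} (with $q_1 = k^2(1+\rho)$, $q_2 = k^2$) forces $\widetilde v_1 = v_2 \equiv 0$ in $B_R$.

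It then remains to conclude that $v_1\equiv 0$ as well. From $v_2\equiv 0$ we get, via the transmission conditions on $\Pi_1\cup\Pi_2$, that $v_1 = 0$ and $\partial_\nu v_1 = 0$ on both line segments. Now $v_1$ restricted to $\Omega_1$ is an analytic solution of $\Delta v_1 + k^2 v_1 = 0$ in the sector $\{0<r<R,\ 0<\theta<3\pi/2\}$ with vanishing Cauchy data on the two bounding radii $\theta=0$ and $\theta=3\pi/2$. By Holmgren's uniqueness theorem (or by expanding $v_1|_{\Omega_1}$ in the Taylor/Bessel series of Lemma~\ref{Th1} and matching the vanishing Cauchy data on the rays, exactly as in the Case One / Case Two analysis there), this forces $v_1\equiv 0$ in $\Omega_1$. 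Similarly $v_1$ restricted to $\Omega_2$ solves $\Delta v_1 + k^2(1+\rho)v_1 = 0$ with zero Cauchy data on $\Pi_1$ and $\Pi_2$ and hence vanishes in $\Omega_2$. Since $\overline{\Omega_1}\cup\overline{\Omega_2} = \overline{B_R}$, we conclude $v_1\equiv 0$ in $B_R$, which completes the proof.

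The main obstacle I expect is the justification of the analytic continuation of $v_1|_{\Omega_2}$ across the corner to a constant-coefficient Helmholtz solution on all of $B_R$ — i.e.\ showing $\widetilde v_1$ is well defined on the full disk, not just on a sub-sector. This is where one uses that a solution of the Helmholtz equation on a sector with analytic boundary data on the straight sides extends analytically past the vertex; concretely one can invoke the Taylor-series representation from Lemma~\ref{Th1} (valid on a full disk) and observe that the coefficients are already determined by the data of $v_1|_{\Omega_2}$ through the same recurrence \eqref{recur1}, so the series defines $\widetilde v_1$ on $B_R$. An alternative, perhaps cleaner, route avoids extension altogether: work directly with the Taylor expansions of $v_1|_{\Omega_1}$, $v_1|_{\Omega_2}$ and $v_2|_{\Omega_1\cup\Omega_2}$ — each valid on its own sector but with coefficients obeying \eqref{recur1} with potential $k^2$, $k^2(1+\rho)$, $k^2$ respectively — and redo the matching argument of Lemma~\ref{Th1}; the only change from Lemma~\ref{Th1} is that $v_1$ now carries two different recurrence constants on the two sectors, but since the transmission conditions only couple boundary values on $\Pi_1\cup\Pi_2$ and the induction in Lemma~\ref{Th1} propagates from $l$ to $l+2$ using $q_1\neq q_2$ and $\lambda\neq1$ exactly once at each stage, the same bookkeeping goes through verbatim. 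I would present the reduction-to-Lemma~\ref{Th1} version as the main argument and remark that the direct series argument also works.
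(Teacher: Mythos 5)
Your argument is correct and essentially the same as the paper's: the paper likewise extends $v_1|_{\Omega_2}$ across the corner via the Cauchy--Kowalewski theorem in a piecewise analytic domain (citing Lemma 2.1 of \cite{LHY}) to a solution $w_1$ of $\Delta w_1+k^2(1+\rho)w_1=0$ and applies Lemma \ref{Th1} to the pair $(w_1,v_2)$, exactly as you propose with $\widetilde v_1$. The only cosmetic difference is the final step, where the paper deduces $v_1\equiv 0$ in $B_R$ from $w_1\equiv0$ near the corner by unique continuation, rather than by reading off zero Cauchy data on $\Pi_1\cup\Pi_2$ and invoking Holmgren on each sector.
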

\begin{proof}
Set $q_{1}:=k^{2}(1+\rho_{1})$ in $\Omega_{2}$. Since the Cauchy data of $v_{2}$ are analytic on $\Pi_{1}\cup\Pi_{2}$, the Cauchy data of $v_{1}$ are also analytic there by the transmission boundary conditions. Since $v_{1}$ is analytic in $\Omega_2$, by the Cauchy-Kowalewski theorem in a piecewise analytic domain (see \cite[Lemma 2.1]{LHY}), the function $v_1$ can be analytically extended from $\Omega_2$ to a full neighboring area of the corner as a solution of the Helmholtz equation $\Delta w_{1}+q_{1}w_{1}=0$, where $w_1$ denotes the extended solution. Now applying Lemma \ref{Th1} to $w_1$ and $v_2$ gives $w_{1}=v_{2}\equiv0$ near the origin. This together with the unique continuation leads to $v_{1}=v_{2}\equiv0$ in $B_{R}$.
\end{proof}

To investigate the regularity of solutions to the Helmholtz equation in a corner domain, we consider the transmission problem
\begin{equation} \label{ra}
\left\{\begin{array}{lll}
\Delta u_{\ell}+k_{\ell}^{2}u_{\ell}=0,&\quad \mbox{in}\quad\Omega_{\ell}, \\
u_{1}=u_{2}, \quad \partial_{\nu}u_{1}=\lambda\partial_{\nu}u_{2},&\quad \mbox{on}\quad\Pi_{\ell},
\end{array}\right.
\end{equation}
where $k_{\ell}$ ($\ell=1,2$) are constants satisfying $k_{1}\neq k_{2}$ and the unit normal vector $\nu$ at $\Pi_{\ell}$ is supposed to point into $\Omega_{1}$. To rewrite the system (\ref{ra}) into a divergence form, we define
\begin{equation*}
\hat{a}(\theta):=\left\{\begin{array}{ll}
1,  \quad {\rm in}~~\Omega_{1}, \\
\lambda, \quad {\rm in}~~\Omega_{2},
\end{array}\right. \quad\quad \hat{\kappa}(\theta):=\left\{\begin{array}{ll}
k_{1}^{2},\quad & {\rm in}~~\Omega_{1}, \\
\lambda k_{2}^{2},  \quad & {\rm in}~~\Omega_{2},
\end{array}\right. \quad\quad \hat{u}(r,\theta):=\left\{\begin{array}{ll}
u_{1},\quad {\rm in}~~\Omega_{1}, \\
u_{2},\quad {\rm in}~~\Omega_{2}.
\end{array}\right.
\end{equation*}
Then the transmission problem (\ref{ra}) can be equivalently written as
\begin{equation*}
\nabla\cdot(\hat{a}(\theta)\nabla \hat{u})+\hat{\kappa}(\theta)\hat{u}=0 \quad \mbox{in}~B_{R}.
\end{equation*}
By a decomposition theorem (see e.g., \cite{ES98,Pe2005, Pe2001}), one obtains
\begin{equation*}
\hat{u}=\hat{w}+\sum\limits_{j=1}^{m}c_{j}r^{\eta_{j}}\varphi_{j}(\theta)(\ln r)^{p_j} \quad \mbox{in}\quad B_{R}, \quad p_j\in\{0,1,\cdots\},
\end{equation*}
where $\hat{w}\in H^{2}(\Omega_{\ell})$ ($\ell=1,2$) and $\eta_{j}\in(0,1)$ are eigenvalues of the following positive definite Sturm-Liouville eigenvalue problem:
\begin{equation} \label{b}
\left\{\begin{array}{lll}
\varphi_{j}^{''}(\theta)+\eta_{j}^{2}\varphi_{j}(\theta)=0,\quad & \theta\in(0,3\pi/2)\cup(-\pi/2,0), \vspace{0.2cm}  \\
\varphi_{j,+}(0)=\varphi_{j,-}(0), \quad & \varphi^{'}_{j,+}(0)=\lambda \varphi_{j,-}^{'}(0), \vspace{0.2cm} \\
\varphi_{j}(3\pi/2)=\varphi_{j}(-\pi/2),\quad & \varphi_{j}^{'}(3\pi/2)=\lambda \varphi_{j}^{'}(-\pi/2).
\end{array}\right.
\end{equation}
In \eqref{b},  the subscripts '$+$' and '$-$' denote the limits from $\Omega_{1}$ and $\Omega_{2}$, respectively. It is obvious that $\eta_0=0$ is an eigenvalue with the eigenfunction $\varphi_{j,\pm}\equiv C\in \C$.
A general solution to (\ref{b}) takes the form
\begin{equation}\label{varphi}
\varphi_j(\theta)=\left\{\begin{array}{l}
A_j^{+}\cos(\eta_j\theta)+B_j^{+}\sin(\eta_j\theta),\quad \theta\in(0,3\pi/2), \\
A_j^{-}\cos(\eta_j\theta)+B_j^{-}\sin(\eta_j\theta),\quad \theta\in(-\pi/2,0),
\end{array}\right.
\end{equation}
where the non-vanishing coefficients $A_j^{\pm}$, $B_j^{\pm}$ are uniquely determined by the transmission conditions through a homogeneous 4-by-4  algebraic system. Lengthy calculations give the first positive eigenvalue (see Appendix)
\begin{equation}\label{eta1}
\eta_1=\frac{1}{\pi} \arccos\Big(-\frac{\lambda^{2}+6\lambda+1}{2(\lambda+1)^{2}}\Big)>\frac{2}{3},
\end{equation} which yields the leading singularity of $\hat{u}$ around the origin.

\begin{lemma} \label{Lema1}
For $\theta\in[0,\pi]$, we have $\varphi_j(\theta)=\varphi_j(\theta+\pi/2)$ if and only if $\eta_{j}=4N$; $\varphi_j(\theta)+\varphi_j(\theta+\pi/2)=0$ if and only if $\eta_{j}=4N+2$. Here $N\in\mathbb{N}$.
\end{lemma}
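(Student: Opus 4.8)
The plan is to reduce both identities to the single trigonometric branch of $\varphi_j$ on $\Omega_1$. For $\theta\in[0,\pi]$ one has $\theta\in[0,3\pi/2]$ and $\theta+\pi/2\in[\pi/2,3\pi/2]$, so both arguments stay inside the angular sector of $\Omega_1$, and by \eqref{varphi} the quantities $\varphi_j(\theta)\mp\varphi_j(\theta+\pi/2)$ involve only the branch $\varphi_j(\theta)=A_j^{+}\cos(\eta_j\theta)+B_j^{+}\sin(\eta_j\theta)$. Writing this as $\varphi_j(\theta)=\operatorname{Re}\!\big(c_j e^{i\eta_j\theta}\big)$ with $c_j:=A_j^{+}-iB_j^{+}$, I get
\[
\varphi_j(\theta)\mp\varphi_j(\theta+\pi/2)=\operatorname{Re}\!\Big(c_j\big(1\mp e^{i\eta_j\pi/2}\big)e^{i\eta_j\theta}\Big),\qquad \theta\in[0,\pi].
\]

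First I would dispose of $\eta_j=0$ (the constant eigenfunction $\varphi_j\equiv C\neq0$): there the first identity holds and $0=4\cdot 0$, while $\varphi_j(\theta)+\varphi_j(\theta+\pi/2)=2C\neq0$ and $0\notin 4\mathbb{N}+2$, so the statement is consistent. For $\eta_j>0$ the argument rests on the elementary observation that $\theta\mapsto\operatorname{Re}(d\,e^{i\eta_j\theta})$ vanishes identically on the nondegenerate interval $[0,\pi]$ only if $d=0$. Applying this with $d=c_j(1\mp e^{i\eta_j\pi/2})$ reduces the two identities, respectively, to $c_j(1-e^{i\eta_j\pi/2})=0$ and $c_j(1+e^{i\eta_j\pi/2})=0$.

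It then remains to rule out $c_j=0$, i.e. to show that an eigenfunction cannot vanish identically on $\Omega_1$. This is the only place the transmission conditions are used: if $\varphi_j\equiv0$ on $\Omega_1$, then $\varphi_{j,+}(0)=\varphi_{j,+}'(0)=0$, and the interface relations in \eqref{b} together with $\lambda\neq0$ force $\varphi_{j,-}(0)=\varphi_{j,-}'(0)=0$, whence $\varphi_j\equiv0$ on $\Omega_2$ as well, contradicting that $\varphi_j$ is an eigenfunction; alternatively one invokes the non-vanishing of $A_j^{\pm},B_j^{\pm}$ recorded after \eqref{varphi}. With $c_j\neq0$ the reduced conditions become $e^{i\eta_j\pi/2}=1$ and $e^{i\eta_j\pi/2}=-1$, i.e. $\eta_j\in 4\mathbb{Z}$ and $\eta_j\in 4\mathbb{Z}+2$; since $\eta_j>0$ these read exactly $\eta_j=4N$ and $\eta_j=4N+2$ with $N\in\mathbb{N}$. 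The converse implications are immediate, since $\eta_j=4N$ (resp. $4N+2$) makes $e^{i\eta_j\pi/2}=1$ (resp. $-1$), so that $\varphi_j|_{\Omega_1}$ is $\pi/2$-periodic (resp. $\pi/2$-antiperiodic) and the stated identity follows on $[0,\pi]$.

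I do not expect a serious obstacle here. The only points needing care are (i) checking that the shift by $\pi/2$ keeps both arguments within the single branch $[0,3\pi/2]$, so the transmission conditions play no role in the computation itself; (ii) the separate treatment of $\eta_j=0$, where the exponential representation degenerates; and (iii) excluding $c_j=0$, which is precisely where the transmission condition and $\lambda\neq0$ enter.
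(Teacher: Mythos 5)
Your proof is correct, and while the forward direction coincides with the paper's (both reduce to the single branch $A_j^{+}\cos(\eta_j\theta)+B_j^{+}\sin(\eta_j\theta)$, valid since $\theta,\theta+\pi/2\in[0,3\pi/2]$, and compute directly), your converse takes a genuinely different and in fact more self-contained route. The paper evaluates the identity only at the two points $\theta=0$ and $\theta=\pi$, obtains a $2\times 2$ homogeneous system for $(A_j^{+},B_j^{+})$ with determinant $2\sin(\pi\eta_j)\bigl[1\mp\cos(\pi\eta_j/2)\bigr]$, and concludes that $\eta_j$ cannot be ``fractional''; to pass from ``not fractional'' to $\eta_j=4N$ it then implicitly leans on the classification of the eigenvalues of \eqref{b} established in the Appendix (every non-fractional eigenvalue is an even integer $2l$) together with the prior exclusion of $4N+2$. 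You instead use the identity on the whole interval: writing $\varphi_j(\theta)\mp\varphi_j(\theta+\pi/2)=\operatorname{Re}\bigl(c_j(1\mp e^{i\eta_j\pi/2})e^{i\eta_j\theta}\bigr)$ and invoking that $\operatorname{Re}(d\,e^{i\eta_j\theta})$ cannot vanish on a nondegenerate interval unless $d=0$ (for $\eta_j>0$), you get $e^{i\eta_j\pi/2}=\pm 1$ directly, hence $\eta_j\in 4\mathbb{N}$ or $4\mathbb{N}+2$ with no appeal to the spectral computation. You also treat the two edge cases the paper glosses over — the degenerate eigenvalue $\eta_j=0$ and the exclusion of $c_j=0$ via the transmission conditions at $\theta=0$ and $\lambda\neq 0$ — explicitly and correctly. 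The only cosmetic caveat is that the representation $\varphi_j=\operatorname{Re}(c_je^{i\eta_j\theta})$ presumes $A_j^{+},B_j^{+}$ real (legitimate for this real Sturm--Liouville problem, and consistent with the paper's conventions); for complex coefficients the same conclusion follows by writing $\varphi_j$ as a combination of $e^{\pm i\eta_j\theta}$ and using their linear independence on an interval.
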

\begin{proof}
Recalling the expression of $\varphi_j(\theta)$ in (\ref{varphi}), we have
\begin{equation*}
\varphi_j(\theta+\pi/2)=A_{j}^{+}\cos(\eta_{j}(\theta+\pi/2)) +B_{j}^{+}\sin(\eta_{j}(\theta+\pi/2)), \quad \theta\in[0,\pi].
\end{equation*}
For $\eta_{j}=4N$, we obtain
\begin{equation*}
\varphi_j(\theta+\pi/2)
=A_{j}^{+}\cos(4N\theta)+B_{j}^{+}\sin(4N\theta)=\varphi_j(\theta).
\end{equation*}
If $\eta_{j}=4N+2$, then
\begin{equation*}
\varphi_j(\theta+\pi/2)
=-A_{j}^{+}\cos((4N+2)\theta)-B_{j}^{+}\sin((4N+2)\theta)=-\varphi_j(\theta).
\end{equation*}
Conversely, if $\varphi_j(\theta)=\varphi_j(\theta+\pi/2)$ for $\theta\in[0,\pi]$, then $\eta_{j}\neq4N+2$. In the following, we only need to show that the eigenvalue $\eta_j$ can't be a fractional number which implies $\eta_{j}=4N$. Setting $\theta=0$ and $\theta=\pi$ in the equality $\varphi_j(\theta)=\varphi_j(\theta+\pi/2)$ yields
\begin{equation*}
\left(\begin{array}{cc}
1-\cos(\pi\eta_{j}/2) & -\sin(\pi\eta_{j}/2) \\
\cos(\pi\eta_{j})-\cos(3\pi\eta_{j}/2)& \sin(\pi\eta_{j})-\sin(3\pi\eta_{j}/2)
\end{array}\right)
\left(\begin{array}{cc} A_{j}^{+} \\ B_{j}^{+}\end{array}\right)=\left(\begin{array}{cc} 0 \\ 0 \end{array}\right).
\end{equation*}
By simple calculation,
\begin{equation*}
\left|\begin{array}{cc}
1-\cos(\pi\eta_{j}/2) & -\sin(\pi\eta_{j}/2) \\
\cos(\pi\eta_{j})-\cos(3\pi\eta_{j}/2)& \sin(\pi\eta_{j})-\sin(3\pi\eta_{j}/2)
\end{array}\right|=2\sin(\pi\eta_{j})\big[1-\cos(\pi\eta_{j}/2)\big],
\end{equation*}
which cannot vanish when $\eta_j$ is a fractional number. Hence, $A_{j}^{+}=B_{j}^{+}=0$, which is impossible.

Similarly, if $\varphi_j(\theta)+\varphi_j(\theta+\pi/2)=0$ for $\theta\in[0,\pi]$, then $\eta_{j}\neq4N$. To show that the eigenvalue $\eta_j$ can't be a fractional number, we take $\theta=0$ and $\theta=\pi$ in the equality $\varphi_j(\theta)+\varphi_j(\theta+\pi/2)=0$. It then follows the linear system
\begin{equation*}
\left(\begin{array}{cc}
1+\cos(\pi\eta_{j}/2) & \sin(\pi\eta_{j}/2) \\
\cos(\pi\eta_{j})+\cos(3\pi\eta_{j}/2)& \sin(\pi\eta_{j})+\sin(3\pi\eta_{j}/2)
\end{array}\right)
\left(\begin{array}{cc} A_{j}^{+} \\ B_{j}^{+}\end{array}\right)=\left(\begin{array}{cc} 0 \\ 0 \end{array}\right).
\end{equation*}
In this case
the determinant of coefficient matrix is given by
$2\sin(\pi\eta_{j})\big[1+\cos(\pi\eta_{j}/2)\big]$, which
does not vanish  when $\eta_j$ is a fractional number. Hence, $\eta_j=4N+2$ for some $N\in \mathbb{N}$. 
\end{proof}

In the subsequent sections, we normalize the eigenfunctions in $L^{2}(-\pi/2,3\pi/2)$, that is, $\varphi_{0}(\theta)=1/\sqrt{2\pi}$ and
\begin{equation*}
\int_{-\pi/2}^{3\pi/2}|\varphi_{j}(\theta)|^{2}{\rm d}_{\theta}=1, \quad
\int_{-\pi/2}^{3\pi/2}\varphi_{j}(\theta) \overline{\varphi_{l}(\theta)}{\rm d}_{\theta}=\delta_{jl}:=\left\{\begin{array}{cc} 1, & \quad {\rm if}~j=l, \\
0, & \quad {\rm if}~j\neq l. \end{array}\right.
\end{equation*}
Then, we make an ansatz on the solution $\hat{u}$ to (\ref{ra}) of the form
\begin{equation}  \label{Dec}
\hat{u}(r,\theta)=\sum_{j\geq0}\alpha_{j}r^{\eta_{j}}\varphi_{j}(\theta)+\sum_{j\geq0}e_{j}(r)\varphi_{j}(\theta), \quad \alpha_j\in \C,
\end{equation}
where the second term is required to satisfy the inhomogeneous equation
\begin{equation*}
\sum_{j\geq0}\nabla\cdot \big[\hat{a}(\theta)\nabla  (e_{j}(r)\varphi_{j}(\theta))\big]=f(r,\theta),
\end{equation*}
with $f(r,\theta):=-\hat{\kappa}(\theta)\hat{u}(r,\theta)$ in $B_{R}$. Since $\hat{a}(\theta)$ is a piecewise constant function, it holds that
\begin{equation*}
\sum_{j\geq0}\Big[\frac{1}{r}(re_{j}^{'})^{'}-\frac{\eta_{j}^2}{r^{2}}e_{j}\Big]\varphi_j(\theta)=\frac{f(r,\theta)}{\hat{a}(\theta)}.
\end{equation*}
Multiplying $\overline{\varphi_l(\theta)}$ to both sides of the above equation and integrating over $(-\pi/2,3\pi/2)$ with respect to $\theta$ yields
\begin{equation*}
\frac{1}{r}(re_{j}^{'})^{'}-\frac{\eta_{j}^2}{r^{2}}e_{j}=f_{j}(r),
\end{equation*}
where
\begin{equation}\label{fj}
f_{j}(r)
=-\int_{-\pi/2}^{0}k_{2}^{2}u_{2}(r,\theta)\overline{\varphi_{j}(\theta)}{\rm d}_{\theta}
-\int_{0}^{3\pi/2}k_{1}^{2}u_{1}(r,\theta)\overline{\varphi_{j}(\theta)}{\rm d}_{\theta}.
\end{equation}
An explicit expression of $e_j$ is given by (see e.g., \cite{BFI})
\begin{equation*}
e_{j}(r)=\frac{r^{\eta_{j}}}{2\eta_{j}}\int_{r_{0}/2}^{r}f_{j}(s)s^{1-\eta_{j}}{\rm d}s-\frac{r^{-\eta_{j}}}{2\eta_{j}} \int_{0}^{r}f_{j}(s)s^{1+\eta_{j}}{\rm d}s\quad {\rm for}~j>0,~0<r_{0}<r.
\end{equation*}
In the special case $j=0$, one has
\begin{equation} \label{ej0}
\frac{1}{r}(re_{0}^{'}(r))^{'}=f_{0}(r):=-\frac{1}{\sqrt{2\pi}}\int_{-\pi/2}^{0}k_{2}^{2}u_{2}(r,\theta){\rm d}_{\theta}
-\frac{1}{\sqrt{2\pi}}\int_{0}^{3\pi/2}k_{1}^{2}u_{1}(r,\theta){\rm d}_{\theta}.
\end{equation}
Straight forward calculations yield the leading terms of $f_0$ and $e_0$.
\begin{lemma} \label{lem}
Let $u_0=u_1(O)=u_2(O)$. we have
\begin{equation*}
f_{0}(r)=-\frac{\pi}{2}\Big(k_{2}^{2}+3k_{1}^{2}\Big)\frac{u_0}{\sqrt{2\pi}}+o(1), \quad
e_{0}(r)=-\frac{\pi}{8}\Big(k_{2}^{2}+3k_{1}^{2}\Big)\frac{u_0}{\sqrt{2\pi}}r^{2}+o(r^{2}), \quad {\rm as}~ r\rightarrow0.
\end{equation*}
\end{lemma}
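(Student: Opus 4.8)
## Proof proposal for Lemma \ref{lem}

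The plan is to expand everything to leading order in $r$ using the regularity of $u_1$ and $u_2$ near the corner, then extract the $r\to 0$ asymptotics termwise. Since $u_\ell$ solves $\Delta u_\ell + k_\ell^2 u_\ell = 0$ and is continuous up to the origin with $u_1(O)=u_2(O)=u_0$ (the matching condition on $\Pi_1\cup\Pi_2$ forces the two values to agree at the shared vertex), we may write $u_\ell(r,\theta) = u_0 + o(1)$ uniformly as $r\to 0$. Substituting this into the definition \eqref{ej0} of $f_0$, the angular integrals are elementary: $\int_{-\pi/2}^{0}\mathrm{d}_\theta = \pi/2$ and $\int_{0}^{3\pi/2}\mathrm{d}_\theta = 3\pi/2$. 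Hence
\begin{equation*}
f_0(r) = -\frac{1}{\sqrt{2\pi}}\Big(k_2^2\cdot\frac{\pi}{2} + k_1^2\cdot\frac{3\pi}{2}\Big)u_0 + o(1) = -\frac{\pi}{2}\big(k_2^2+3k_1^2\big)\frac{u_0}{\sqrt{2\pi}} + o(1),
\end{equation*}
which is the first claim. The only subtlety is justifying that the $o(1)$ remainder from $u_\ell - u_0$ survives the angular integration as an $o(1)$ term; this is immediate from uniform continuity of $u_\ell$ on $\overline{\Omega_\ell}$ near $O$, which in turn follows from Proposition \ref{prop}(ii) or the decomposition \eqref{Dec} together with $\eta_j>0$.

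For $e_0$, I would integrate the ODE \eqref{ej0}, namely $\frac{1}{r}(re_0')' = f_0(r)$, which gives $(re_0')' = r f_0(r)$. Writing $f_0(r) = c_0 + o(1)$ with $c_0 := -\frac{\pi}{2}(k_2^2+3k_1^2)\frac{u_0}{\sqrt{2\pi}}$, one integrates once: $re_0'(r) = \frac{c_0}{2}r^2 + o(r^2)$, choosing the integration constant to be zero so that $e_0'$ stays bounded (equivalently $re_0'\to 0$) at the origin, consistent with $e_0$ being the regular ``particular solution'' part of the ansatz \eqref{Dec} that carries no $r^0$ or $\ln r$ singular contribution. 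Dividing by $r$ and integrating again, $e_0(r) = \frac{c_0}{4}r^2 + o(r^2)$, i.e.
\begin{equation*}
e_0(r) = -\frac{\pi}{8}\big(k_2^2+3k_1^2\big)\frac{u_0}{\sqrt{2\pi}}\,r^2 + o(r^2), \qquad r\to 0,
\end{equation*}
which is the second claim. One should check that the $o(r^2)$ obtained after two successive integrations of an $o(1)$ quantity is genuinely $o(r^2)$ and not merely $O(r^2)$: this holds because $\int_0^r s\cdot o(1)\,\mathrm{d}s = o(r^2)$ and then $\int_0^r s^{-1}\cdot o(s^2)\,\mathrm{d}s = o(r^2)$ by dominated convergence / a direct $\varepsilon$-$\delta$ estimate.

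The main obstacle, such as it is, is pinning down the correct normalization of the integration constants when solving the $j=0$ ODE — that is, arguing that the homogeneous solutions $1$ and $\ln r$ of $\frac{1}{r}(re_0')'=0$ do not appear in $e_0$. This is forced by the role of $e_0$ in the decomposition \eqref{Dec}: the term $\alpha_0 r^{\eta_0}\varphi_0 = \alpha_0/\sqrt{2\pi}$ already absorbs the constant mode, and a $\ln r$ term would contradict $\hat u\in H^{1+s}$ for $s<1/2$ (or is simply excluded by construction since $\eta_0=0$ is a simple eigenvalue, so $p_0=0$ in the decomposition theorem). Once this is settled, the remaining steps are the routine angular integrations and ODE integrations sketched above, and there are no further difficulties.
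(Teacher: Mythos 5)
Your proposal is correct and follows exactly the ``straightforward calculations'' the paper alludes to without writing out: substitute $u_\ell=u_0+o(1)$ into \eqref{ej0}, perform the elementary angular integrations, and integrate the radial ODE twice, which reproduces the stated constants (and is consistent with the later formula $e_{0,0}=\frac{\sqrt{2\pi}}{4}d_{0,0}u_0r^2+o(r^3)$ with $d_{0,0}=-\frac14(3k_1^2+k_2^2)$). Your extra care in ruling out the homogeneous solutions $1$ and $\ln r$ of the $j=0$ equation is a useful justification that the paper leaves implicit.
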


\section{Proof of Theorem \ref{Main}} \label{Shape}
From the coincidence of $u_1$ and $u_2$ on $\Gamma_H$,
we obtain $u_{1}=u_{2}$ in $x_{2}>H$. The unique continuation of solutions to the Helmholtz equation leads to
\begin{equation}\label{condition}
u_{1}(x_{1},x_{2})=u_{2}(x_{1},x_{2})\quad \mbox{ for all }x\in \Omega_{\Lambda_{1}}^{+}\cap\Omega_{\Lambda_{2}}^{+}.
\end{equation}
Assume on the contrary that $\Lambda_1\neq \Lambda_2$. Switching the notations for $\Lambda_1$ and $\Lambda_2$ if necessary, we only need to consider the following  cases:
\begin{itemize}
  \item Case one: there exists a corner point $O$ of $\Lambda_{1}$ such that $O\in\Omega_{\Lambda_{2}}^{+}$ (see Figure \ref{fig1});

  \item Case two: all corners of $\Lambda_1$ and $\Lambda_2$ coincide but $\Lambda_1\neq \Lambda_2$ (see Figure \ref{fig2});

  \item Case three:  there exists a corner point $O$ of $\Lambda_{2}$ lying on $\Lambda_1$, but $O$ is not a corner of $\Lambda_1$ (see Figure \ref{fig3}).
\end{itemize}
Obviously, the corners of $\Lambda_1$ and $\Lambda_2$ do not coincide completely in the first and last cases. Using coordinate translation, we suppose that the corner $O$ is located at the origin. Below we shall prove that neither of previous three cases occurs. This contraction yields $\Lambda_1=\Lambda_2$.
\subsection{Case one}\label{case-one}
Choose $R>0$ such that $B_{R} \subseteq \Omega_{\Lambda_{2}}^{+}$.
\begin{figure}[h] \label{fig1}
  \centering
  \begin{tikzpicture}[scale=0.7]
   \filldraw[gray!30,opacity=0.5] (2,4)--(3.5,4) arc (0:270:1.5);
  \filldraw[gray,opacity=0.5] (2,4)--(2,2.5) arc (270:360:1.5);
  \draw[gray, thick] (0,0) -- (2,0)-- (2,4) -- (4,4) -- (4,2) --(6,2) --(6,-1) --(8,-1)--(8,1)--(10,1)--(10,0)--(12,0);
  \draw[gray, thick, dashed]  (0,0) -- (0,1.5)-- (3,1.5) -- (3,-0.5) -- (5,-0.5) --(5,3.5) --(7,3.5) --(7,-2)--(9,-2)--(9,1.5)--(12,1.5);
  \draw[gray,thick] (2,4) circle (1.5cm); \draw (1.8,4) node{$O$};\filldraw[black] (2,4) circle (0.8pt);
  \draw (3.8,4.2) node{$\Lambda_{1}$}; \draw (3.8,-0.8) node{$\Lambda_{2}$};
  \draw (2.7,3.6) node{$B_{R}\cap\Omega_{\Lambda_{1}}^{-}$}; \draw (1.8,4.8) node{$B_{R}\cap\Omega_{\Lambda_{1}}^{+}$};
  \end{tikzpicture}
  \caption{Case one:  there exists a corner point $O$ of $\Lambda_{1}$ such that $O\in\Omega_{\Lambda_{2}}^{+}$.}
\end{figure}
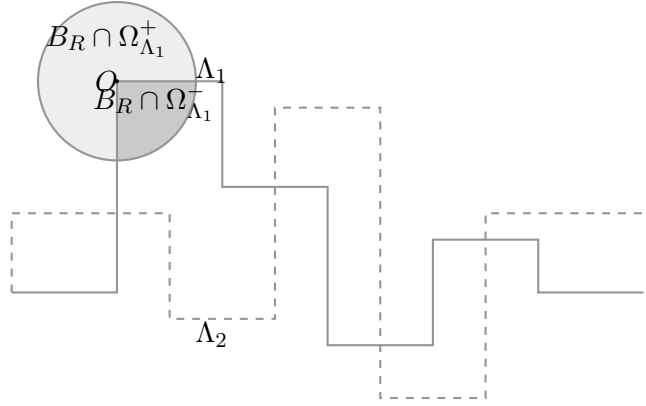
Since the corner point $O\in\Omega_{\Lambda_{2}}^{+}$ stays away from $\Lambda_{2}$, the function $u_{2}$ satisfies the Helmholtz equation with the wave number $k_{1}$ in $B_{R}$, while $u_{1}$ fulfills the Helmholtz equation with the variable potential $k_{1}^{2}(1+\rho_{1})$. Here, $\rho_{1}(x)$ is a piecewise constant function defined by
\begin{equation*}
\rho_{1}(x):=\left\{\begin{array}{lll}
 0,& \quad \mbox{in}\quad B_{R}\cap\Omega_{\Lambda_{1}}^{+},  \vspace{0.2cm} \\
 (\frac{k_{2}}{k_{1}})^{2}-1,& \quad \mbox{in}\quad B_{R}\cap\Omega_{\Lambda_{1}}^{-}.
\end{array}\right.
\end{equation*}
Recalling the transmission conditions in (\ref{a}), we find that the pair $(u_{1},u_{2})$ is a solution to
\begin{equation*}
\left\{\begin{array}{lll}
\Delta u_{1}+k_{1}^{2}(1+\rho_{1}(x))u_{1}=0,\quad &\mbox{in}\quad B_{R},\vspace{0.1cm}\\
\Delta u_{2}+k_{1}^{2}u_{2}=0,\quad &\mbox{in}\quad B_{R}, \vspace{0.1cm}\\
u_{1}=u_{2}, \quad \lambda\frac{\partial u_{1}^{-}}{\partial\nu}=\frac{\partial u_{2}}{\partial\nu}, \quad &\mbox{on}\quad  B_{R}\cap\Lambda_{1}.
\end{array}\right.
\end{equation*}
Here, the symbol $(\cdot)^-$ denotes the limit from $\Omega_{\Lambda_{1}}^{-}$.
Applying Lemma \ref{Th2}, we obtain $u_{1}=0$ in $B_{R}$ and thus $u_{1}=0$ in $\mathbb{R}^{2}$, which is impossible (see \cite{XH}).

\subsection{Case two}
The corners of $\Lambda_{1}$ and $\Lambda_{2}$ coincide (see Figure \ref{fig2}), implying that $\Lambda_1$ and $\Lambda_2$ have the same height and also the same grooves but with different opening directions. This section relies on ingenious analysis on the regularity of solutions to the Helmholtz equation in a corner domain. We refer to \cite{Pe2001} for an overview of the interface problem of the Laplacian equation.
\begin{figure}[h] \label{fig2}
  \centering
  \begin{tikzpicture}[scale=0.7]
    \draw[gray, thick] (0,0) -- (2,0)-- (2,4) -- (4,4) -- (4,0) --(5.5,0) --(5.5,4) --(7,4)--(7,0)--(9,0);
  \draw[gray, thick, dashed] (0,4)-- (2,4) -- (2,0) -- (4,0) --(4,4) --(5.5,4) --(5.5,0)--(7,0)--(7,4)--(9,4);
  \draw[gray,thick] (2,4) circle (1cm); \draw (1.8,4.2) node{$O$};\filldraw[black] (2,4) circle (0.8pt);
  \draw (3.4,4.2) node{$\Lambda_{1}$}; \draw (3.4,0.2) node{$\Lambda_{2}$};
  \end{tikzpicture}
  \caption{Case two: corners of $\Lambda_1$ and $\Lambda_2$ are identical but $\Lambda_1\neq \Lambda_2$.}
\end{figure}
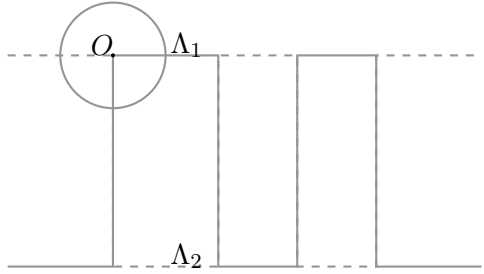

Choose a corner point $O\in\Lambda_{1}\cap\Lambda_{2}$ and $R>0$ sufficiently small such that the disk $B_{R}:=\{x\in \mathbb{R}^{2}:|x|<R\}$ does not contain other corners. We can conclude from Proposition \ref{prop} that $u_{1},u_{2}\in H^{1+s}(B_{R})$ ($0\leq s<1/2$) fulfill the system
\begin{equation}\label{uj}
\left\{\begin{array}{lll}
\nabla\cdot(a(\theta)\nabla u_1)+\kappa(\theta)u_{1}=0,\quad &\mbox{in}\quad B_{R},\vspace{0.1cm} \\
\nabla\cdot(a(\theta+\pi/2)\nabla u_{2})+\kappa(\theta+\pi/2)u_{2}=0,\quad & \mbox{in}\quad B_R,
\end{array}\right.
\end{equation}
where
\begin{equation*}
a(\theta):=\left\{\begin{array}{lll}
1, \quad & \mbox{if}\quad \theta\in (0,3\pi/2),\\
\lambda, \quad &\mbox{if}\quad \theta\in (-\pi/2,0),
\end{array}\right. \quad\quad
\kappa(\theta):=\left\{\begin{array}{lll}
k_1^2, \quad &\mbox{if} \quad \theta\in (0, 3\pi/2),\\
\lambda k_2^2, \quad & \mbox{if}\quad \theta\in (-\pi/2,0),
\end{array}\right.
\end{equation*}
and $a(\theta\pm2\pi)=a(\theta)$, $\kappa(\theta\pm2\pi)=\kappa(\theta)$.
It is obvious that $u_2$ coincides with $u_1$ after a rotation about the angle $\pi/2$, that is, $u_2(r, \theta)=u_1(r,\theta+\pi/2)$. In Lemma \ref{Lema2} below, we shall derive a more explicit expression of $u_{\ell}$ $(\ell=1,2)$ under the condition \eqref{condition}.

\begin{lemma} \label{Lema2}
Let $u_{1},u_{2}\in H^{1+s}(B_{R})$ ($0\leq s<1/2$) be solutions to \eqref{uj}. If
\ben
u_1(r,\theta)=u_2(r, \theta)\quad {\rm for~all} \quad \theta\in (0,\pi),\, r\in[0,R),
\enn
then
\begin{equation} \label{equa}
u_{\ell}(r,\theta)=\sum_{n,m\in\mathbb{N}:n+m\geq0}a_{n,m}^{(\ell)}r^{2(n+m)} \psi^{(\ell)}_{2n}(\theta),\quad \ell=1,2
\end{equation}
where $\psi_{2n}^{(1)}(\theta)$ is the normalized eigenfunction of \eqref{b} corresponding to the eigenvalue $\eta=2n$ and $\psi_{2n}^{(2)}(\theta)=\psi_{2n}^{(1)}(\theta+\pi/2)$. 
\end{lemma}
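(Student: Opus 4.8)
The plan is to combine the rotational symmetry $u_2(r,\theta)=u_1(r,\theta+\pi/2)$ with the coincidence hypothesis $u_1=u_2$ on the wedge $\theta\in(0,\pi)$, and then exploit the eigenfunction decomposition of $\hat u$ from \eqref{Dec} together with the parity dichotomy of Lemma \ref{Lema1}. First I would write the decomposition for $u_1$ in $B_R$ as a series in the Sturm–Liouville eigenfunctions $\varphi_j$ of \eqref{b}: since $u_1\in H^{1+s}$ with $s<1/2$ and the first positive eigenvalue satisfies $\eta_1>2/3$, the singular part contributes exponents $\eta_j\in(0,1)$ and the regular part $\hat w\in H^2$ contributes integer-type behaviour; grouping terms one gets $u_1(r,\theta)=\sum_{j\ge0}\big(\alpha_j r^{\eta_j}+e_j(r)\big)\varphi_j(\theta)$ as in \eqref{Dec}. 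Applying the $\pi/2$-rotation gives $u_2(r,\theta)=\sum_{j\ge0}\big(\alpha_j r^{\eta_j}+e_j(r)\big)\varphi_j(\theta+\pi/2)$.

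Next, the coincidence hypothesis on $\theta\in(0,\pi)$ forces $\sum_j\big(\alpha_j r^{\eta_j}+e_j(r)\big)\big[\varphi_j(\theta)-\varphi_j(\theta+\pi/2)\big]=0$ for all such $\theta$ and all small $r$. The key step is to project this identity onto individual radial modes: matching powers of $r$ (or, more carefully, using that the $e_j(r)$ are controlled remainders with the leading orders computed in Lemma \ref{lem} and the $\varphi_j$ form a Riesz/orthonormal basis in $L^2(-\pi/2,3\pi/2)$) one concludes that for every $j$ with $\alpha_j\neq0$ the restriction of $\varphi_j(\theta)-\varphi_j(\theta+\pi/2)$ to $(0,\pi)$ must vanish; since $\varphi_j$ is real-analytic on that subinterval, $\varphi_j(\theta)=\varphi_j(\theta+\pi/2)$ identically on $[0,\pi]$. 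By Lemma \ref{Lema1} this is only possible when $\eta_j=4N$ for some $N\in\mathbb{N}$; in particular $\eta_j$ is an even integer, and writing $\eta_j=2n$ the associated eigenfunction is the one denoted $\psi_{2n}^{(1)}$, with $\psi_{2n}^{(2)}(\theta)=\psi_{2n}^{(1)}(\theta+\pi/2)$ (and note $\psi_{4N+2}$ cannot appear in the expansion of $u_1$, since those modes are odd under the rotation and would survive in the difference). The $e_j$ contributions then necessarily organize themselves into the same even-integer powers $r^{2(n+m)}$ by the recurrence governing $f_j$ and $e_j$, yielding precisely \eqref{equa}.

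The main obstacle I anticipate is making the "projection onto radial modes" rigorous: the two sums $\sum_j\alpha_j r^{\eta_j}\varphi_j$ and $\sum_j e_j(r)\varphi_j$ are not separately orthogonal decompositions in $r$, because the $e_j(r)$ contain mixed powers $r^{\eta_j+2k}$ and $r^{-\eta_j+2k}$ arising from the Green's-function formula, so I cannot naively read off coefficients by comparing a single power of $r$. The clean way around this is to argue by induction on the exponent, exactly mirroring the bootstrap already used in the proof of Lemma \ref{Th1} and in Lemma \ref{lem}: at the lowest order the only surviving term is $\alpha_{j}r^{\eta_j}\varphi_j$ with $\eta_j=\eta_1$ being the smallest exponent present, the coincidence condition kills its angular part unless $\eta_1=4N$, and once a batch of coefficients is known to vanish or to be of even-integer type, the formula for $e_j$ shows the next-order contributions are again of even-integer type, closing the induction. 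A secondary point to check is that the logarithmic terms $(\ln r)^{p_j}$ allowed by the decomposition theorem do not actually occur here — this follows because logarithms appear only at integer exponents $\eta_j$ with resonance in the $e_j$ construction, and one verifies the relevant integral $\int f_j(s)s^{1\mp\eta_j}\,{\rm d}s$ produces no $\ln r$ for the even-integer modes retained, or alternatively one absorbs this into the statement by noting $\hat w\in H^2$ rules out $\ln$-singularities at the leading order. With these technical points handled, \eqref{equa} follows, and the identity $u_1(r,\theta)=u_1(r,\theta+\pi/2)$ on $(0,\pi)$ in fact propagates to show $u_1$ is invariant under the full $\pi/2$-rotation, which is the structural conclusion needed to drive Case two to a contradiction in the sequel.
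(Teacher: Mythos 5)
There is a genuine gap, and it sits at the heart of your argument. You take the identity $u_2(r,\theta)=u_1(r,\theta+\pi/2)$ literally as an equality of functions, so that $u_1$ and $u_2$ share the \emph{same} coefficients $\alpha_j$ and $e_j$ in their eigenfunction expansions; from this you conclude that every mode present must satisfy $\varphi_j(\theta)=\varphi_j(\theta+\pi/2)$ on $(0,\pi)$, hence $\eta_j=4N$ by Lemma \ref{Lema1}, and you explicitly exclude the $\eta_j=4N+2$ modes. This contradicts the very statement you are proving: the expansion \eqref{equa} retains $\psi_{2n}$ for \emph{all} $n\in\mathbb{N}$, including $\psi_2,\psi_6,\dots$. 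In the paper's proof the two solutions only satisfy rotated versions of the same equation; their coefficients $\alpha_j^{(1)},\alpha_j^{(2)}$ are independent unknowns. The coincidence of $u_1$ and $u_2$ on $(0,\pi)$ is used together with the coincidence of $\partial_\theta u_1$ and $\partial_\theta u_2$ (Cauchy data, not just the trace) to produce, mode by mode, a homogeneous $2\times2$ system in $(\alpha_j^{(1)},\alpha_j^{(2)})$ whose determinant $[(A_j^+)^2+(B_j^+)^2]\sin(\pi\eta_j/2)$ is nonzero precisely for the non-even-integer exponents; those modes vanish, while for the even-integer modes one obtains the \emph{relation} $a_{n,m}^{(1)}\psi_{2n}^{(1)}(\theta)=a_{n,m}^{(2)}\psi_{2n}^{(2)}(\theta)$, i.e.\ $a_{n,m}^{(1)}=(-1)^n a_{n,m}^{(2)}$, not vanishing. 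Your route, carried out as written, would "prove" a stronger and false conclusion (and your closing claim that $u_1$ is invariant under the $\pi/2$-rotation likewise fails for the $4N+2$ modes, where the coefficients differ by a sign).

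A second, lesser gap is that you do not actually control the cross-coupling in the particular solutions $e_j(r)$. The substantive work in the paper's induction (Step 2) is showing that the terms $e_{j,N}^{(\ell)}(r)\varphi_j^{(\ell)}(\theta)$ with $\eta_j\neq 2n$ do not contaminate the expansion: this requires proving $a_{n,m}^{(\ell)}d_{j,2n}=0$ for $\eta_j\neq 2n$ at each stage (using Lemma \ref{Lema1} to identify exactly which overlap integrals $d_{j,2n}$ survive the matching on $(0,\pi)$), and killing the potential $r^{2N+2}\ln r$ resonance by showing $a_{N,0}^{(\ell)}D_{2N+2,2N}=0$ from the matching itself rather than from a regularity assertion about $\hat w$. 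Your sentence that the $e_j$ contributions "necessarily organize themselves" into the even powers is precisely the statement that needs the inductive bookkeeping; asserting it does not close the argument.
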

\begin{proof}
To prove (\ref{equa}), it suffices to verify for all $l\in\mathbb{N}$ that
\begin{equation} \label{equa2}
u_{\ell}(r,\theta)=\sum\limits_{0\leq n+m\leq l}a^{(\ell)}_{n,m}r^{2(n+m)}\psi_{2n}^{(\ell)}(\theta)+o(r^{2l}),
\quad {\rm as}\quad r\rightarrow 0.
\end{equation}
Recalling (\ref{Dec}) and Lemma \ref{lem}, we have
\begin{equation} \label{Dec1}
u_{\ell}(r,\theta)=u_{0}+\sum_{j\geq1}\alpha^{(\ell)}_{j}r^{\eta_{j}}\varphi_{j}^{(\ell)}(\theta) +e_{0,0}^{(\ell)}(r)\varphi_{0}^{(\ell)}(\theta)+\sum_{j\geq1}e_{j,0}^{(\ell)}(r)\varphi_{j}^{(\ell)}(\theta),\quad \quad \ell=1,2,
\end{equation}
where $\varphi_{j}^{(1)}(\theta):=\varphi_{j}(\theta)$ are normalized eigenfunctions, $\varphi_{j}^{(2)}(\theta):=\varphi_{j}^{(1)}(\theta+\pi/2)$ and
\begin{equation} \label{e_j}
e_{j,0}^{(\ell)}(r)=\frac{r^{\eta_{j}}}{2\eta_{j}}\int_{r_{0}/2}^{r}f_{j,0}^{(\ell)}(s) s^{1-\eta_{j}}{\rm d}s-\frac{r^{-\eta_{j}}}{2\eta_{j}} \int_{0}^{r}f_{j,0}^{(\ell)}(s)s^{1+\eta_{j}}{\rm d}s,\quad {\rm for}~j>0,~\ell=1,2.
\end{equation}
Here the functions $f_{j,0}^{(\ell)}$ with $\ell=1,2$ are defined analogously to \eqref{fj} and $0<r_0<r$.
By \eqref{eta1}, we know that $\eta_{j}>2/3$ for $j\geq1$, which together with $e_{j,0}^{(\ell)}(r)=o(r)$ ($\ell=1,2$) implies that (\ref{equa2}) holds with $l=n+m=0$ and $a^{(1)}_{0,0}=a^{(2)}_{0,0}=\sqrt{2\pi}u_{0}$. 

\bigskip
{\bf Step 1}: Prove that (\ref{equa2}) holds for $l=1$. It is obvious that if $l=n+m=1$ for some $n,m\in\mathbb{N}$, then $n=0$, $m=1$ or $n=1$, $m=0$. Hence, it suffices to prove
\begin{equation*}
u_{\ell}(r,\theta)=a_{0,0}^{(\ell)}\psi_{0}^{(\ell)}(\theta)+\big[a_{0,1}^{(\ell)}\psi_{0}^{(\ell)}(\theta) +a^{(\ell)}_{1,0}\psi_{2}^{(\ell)}(\theta)\big]r^{2}+o(r^{2}),\quad \mbox{as}\quad r\rightarrow0,\; \ell=1,2,
\end{equation*}
with some $a^{(\ell)}_{0,1}$, $a^{(\ell)}_{1,0}\in\mathbb{C}$ for $\ell=1,2$. Recalling from the definition of $e^{(\ell)}_{j,0}$ ($j\geq0$, $\ell=1,2$) in (\ref{e_j}), we obtain
\be\label{ej}
e^{(\ell)}_{j,0}(r)=\left\{\begin{array}{lll}
\frac{\sqrt{2\pi}}{4-\eta_j^2}\,d_{j,0}\,u_{0}\, r^2+o(r^{3}), &\mbox{if} \quad \eta_j\neq 2, \vspace{0.2cm}\\
\frac{\sqrt{2\pi}}{4}\,d_{j,0}\,u_{0}\,r^2\ln r+o(r^{3}), & \mbox{if} \quad \eta_j=2,
\end{array}\right.\qquad {\rm as}~r\rightarrow 0,
\en
where $d_{j,0}\in \C$ are given by
\be \label{dj}
d_{j,0}:=-\left[k^2_2\int^{0}_{-\pi/2}\psi_{0}^{(1)}(\theta)\overline{\varphi_{j}^{(1)}(\theta)}\,{\rm d}_\theta+k_1^2\int_{0}^{3\pi/2}\psi_{0}^{(1)}(\theta)\overline{\varphi_{j}^{(1)}(\theta)}\,{\rm d}_{\theta}\right],\quad \eta_j\geq 0.
\en
Hence, it follows from \eqref{Dec1} that
\ben
u_{\ell}(r,\theta)=u_{0}+\sum_{0<\eta_{j}<2}\alpha^{(\ell)}_{j}r^{\eta_{j}}\varphi^{(\ell)}_{j}(\theta)+o(r^{l_0}), 
\enn
where $l_0=\max\{\eta_j: 0<\eta_{j}<2\}$. Recalling $u_{1}(r,\theta)=u_{2}(r,\theta)$, $\partial_{\theta}u_{1}(r,\theta) =\partial_{\theta}u_{2}(r,\theta)$ ($\theta\in[0,\pi]$), we obtain  \begin{equation*}
\alpha_{j}^{(1)}\varphi_{j}^{(1)}(\theta)=\alpha^{(2)}_{j}\varphi_{j}^{(2)}(\theta), \quad \alpha^{(1)}_{j}\big[\varphi_{j}^{(1)}(\theta)\big]^{'} =\alpha^{(2)}_{j}\big[\varphi_{j}^{(2)}(\theta)\big]^{'},\quad  \forall~\theta \in(0,\pi),~\eta_{j}\in(0,2),
\end{equation*}
which we can be rewritten as the linear system
\begin{equation*}
\left(\begin{array}{cc}
A_{j}^{+}\cos(\eta_{j}\theta)+B_{j}^{+}\sin(\eta_{j}\theta) & -A_{j}^{+}\cos(\eta_{j}(\theta+\frac{\pi}{2}))-B_{j}^{+}\sin(\eta_{j}(\theta+\frac{\pi}{2}))\\
B_{j}^{+}\cos(\eta_{j}\theta)-A_{j}^{+}\sin(\eta_{j}\theta) & A_{j}^{+}\sin(\eta_{j}(\theta+\frac{\pi}{2}))-B_{j}^{+}\cos(\eta_{j}(\theta+\frac{\pi}{2}))
\end{array}\right)
\left(\begin{array}{cc} \alpha^{(1)}_{j} \\ \alpha^{(2)}_{j}\end{array}\right)=\left(\begin{array}{cc} 0 \\ 0 \end{array}\right).
\end{equation*}
Since the determinant of coefficient matrix is $\big[(A_{j}^{+})^{2}+(B_{j}^{+})^{2}\big]\sin\left(\frac{\pi}{2}\eta_{j}\right)>0$, we obtain $\alpha_{j}^{(1)}=\alpha^{(2)}_{j}=0$ for $0<\eta_{j}<2$.
It then follows from \eqref{Dec1} that
\begin{equation*}
u_{\ell}(r,\theta)=u_{0}+\sum_{2\leq\eta_{j}<4}\alpha^{(\ell)}_{j}r^{\eta_{j}}\varphi^{(\ell)}_{j}(\theta) +\sum_{\eta_j\geq 0}e^{(\ell)}_{j,0}(r)\varphi_j^{(\ell)}(\theta)+o(r^{l_1}),\qquad{\rm as}~r\rightarrow 0,
\end{equation*}
where $l_1=\max\{\eta_j: 2<\eta_{j}<4\}$. Hence,
\be\label{u1}
u_{\ell}(r,\theta)&=&u_{0}+a^{(\ell)}_{1,0}r^2\psi^{(\ell)}_{2}(\theta)+\sum_{2<\eta_{j}<4}\alpha^{(\ell)}_{j}r^{\eta_{j}}\varphi^{(\ell)}_{j}(\theta) +\frac{\sqrt{2\pi}}{4}D_{2,0}\,u_{0} \,r^2\ln r\; \psi_{2}^{(\ell)}(\theta) \\ \nonumber
&&+\,u_{0}\, r^2\sum_{\eta_j\neq 2}\frac{\sqrt{2\pi}\,d_{j,0}}{4-\eta_j^2}\,\varphi^{(\ell)}_j(\theta)+o(r^{l_1}), \quad\quad {\rm as}~r\rightarrow 0,
\en
where $a^{(\ell)}_{1,0}=\alpha^{(\ell)}_{j}$, $D_{2,0}=d_{j,0}$ for $\eta_j=2$. 
Equating the coefficients of the terms $r^2$ and $r^2\ln r$ yields
\ben
&&D_{2,0}u_{0}\;\big[\psi_{2}^{(1)}(\theta)-\psi_{2}^{(2)}(\theta)\big]=0, \\
&&\big[a^{(1)}_{1,0}\psi_{2}^{(1)}(\theta)-a^{(2)}_{1,0}\psi_{2}^{(2)}(\theta)\big]+
\, \sum_{\eta_j\neq 2} \frac{\sqrt{2\pi}d_{j,0}u_{0}}{4-\eta_j^2}\,\big[\varphi_{j}^{(1)}(\theta)-\varphi_{j}^{(2)}(\theta)\big]=0,
\enn
for all $\theta\in(0,\pi)$. Since $\psi_{2}^{(2)}(\theta)=-\psi_{2}^{(1)}(\theta)$, by linear independence of trigonometric functions, we conclude that
\begin{equation*}
D_{2,0}\,u_{0}=0, \quad a^{(1)}_{1,0}+a^{(2)}_{1,0}=0 \quad\quad {\rm and} \quad\quad d_{j,0}\,u_{0}=0 ~~{\rm if}~~\varphi_{j}^{(1)}(\theta)\neq\varphi_{j}^{(2)}(\theta),\; \eta_j\neq 2.
\end{equation*}
If $\varphi_{j}^{(1)}(\theta)=\varphi_{j}^{(2)}(\theta)$, we have $\eta_{j}=4N$ by Lemma \ref{Lema1} and
\begin{equation*}
d_{j,0}=\left\{\begin{array}{lll}
0, && \mbox{if}\quad \eta_{j}=4N,\;N\neq0,\\
-\frac{1}{4}(3k_{1}^{2}+k_{2}^{2}), && \mbox{if}\quad \eta_{j}=0\;({\rm i.e.}~j=0).
\end{array}\right.
\end{equation*}
This implies that the terms with
$j\neq 0$ in the following summation all vanish, i.e.,
\begin{equation*}
r^2\,u_0\sum_{\eta_j\neq 2} \frac{\sqrt{2\pi}\,d_{j,0}}{4-\eta_j^2}\,\varphi^{(\ell)}_j(\theta) =r^2\,u_0\frac{\sqrt{2\pi}\,d_{0,0}}{4}\,\varphi^{(\ell)}_0(\theta).
\end{equation*}
Inserting these results into (\ref{u1})  yields as $r\rightarrow 0$ that
\ben
u_{\ell}(r,\theta)=\sum\limits_{0\leq n+m\leq 1}a_{n,m}^{(\ell)}r^{2(n+m)}\psi_{2n}^{(\ell)}(\theta) +\sum\limits_{2<\eta_{j}<4}\alpha_{j}^{(\ell)}r^{\eta_{j}}\varphi_{j}^{(\ell)}(\theta) +\sum\limits_{\eta_{j}\geq0}e_{j,1}^{(\ell)}(r)\varphi_{j}^{(\ell)}(\theta)+o(r^{l_{1}})
\enn
where $a^{(\ell)}_{0,0}=\sqrt{2\pi}u_{0}$, $a^{(\ell)}_{0,1}=\sqrt{2\pi}\,d_{0,0}\,u_{0}/4$, $a^{(1)}_{1,0}=-a^{(2)}_{1,0}$.
Further, we have
$a_{0,1}^{(\ell)}=a_{0,0}^{(\ell)}\, d_{0,0}/4$ and
\ben
&&a_{0,0}^{(\ell)}\,d_{j,0}=0\quad{\rm for}~\eta_{j}\neq 0; \quad
a_{n,m}^{(1)}\psi_{2n}^{(1)}(\theta)=a_{n,m}^{(2)}\psi_{2n}^{(2)}(\theta)\quad\mbox{for all}\; \,0\leq n+m\leq 1,\\
&&
e^{(\ell)}_{j,1}(r)=e^{(\ell)}_{j,0}(r)-\left\{\begin{array}{lll}
\frac{\sqrt{2\pi}}{4-\eta_j^2}\,d_{j,0}\,u_{0}\, r^2, &\mbox{if} \quad \eta_j\neq 2, \vspace{0.2cm}\\
\frac{\sqrt{2\pi}}{4}\,d_{j,0}\,u_{0}\,r^2\ln r, & \mbox{if} \quad \eta_j=2.
\end{array}\right.
\enn
It is seen from \eqref{ej} that $e^{(\ell)}_{j,1}(r)=o(r^3)$. This finishes the Step 1.
\bigskip

{\bf Step 2}: Induction arguments. We make an induction hypothesis that for some $N\geq1$,
\begin{align} \label{u1N}
\left\{\begin{array}{lll}
u_{\ell}(r,\theta)=\sum\limits_{0\leq n+m\leq N}a_{n,m}^{(\ell)}r^{2(n+m)}\psi_{2n}^{(\ell)}(\theta) +\sum\limits_{2N<\eta_{j}<2N+2}\alpha_{j}^{(\ell)}r^{\eta_{j}}\varphi_{j}^{(\ell)}(\theta) \vspace{0.2cm}   \\
\quad\quad\quad\quad\quad+\sum\limits_{\eta_{j}\geq0}e_{j,N}^{(\ell)}(r)\varphi_{j}^{(\ell)}(\theta)+o(r^{l_{N}});  \vspace{0.3cm} \\
a_{n,m}^{(\ell)}=\frac{a_{n,m-1}^{(\ell)}\, D_{2n,2n}}{(2N)^{2}-(2n)^{2}}, 
~\forall \,n+m=N,\,0\leq n\leq N-1; \vspace{0.2cm} \\
a_{n,m}^{(\ell)}\,d_{j,2n}=0, \quad {\rm for}~\eta_{j}\neq 2n,~\forall \,0\leq n+m\leq N-1;  \vspace{0.2cm} \\
a_{n,m}^{(1)}\psi_{2n}^{(1)}(\theta)=a_{n,m}^{(2)}\psi_{2n}^{(2)}(\theta), \quad \forall \,0\leq n+m\leq N, 
\end{array}\right.
\end{align}
where $e^{(\ell)}_{j,N}(r)$ ($\ell=1,2$) is defined as (\ref{ej0}), (\ref{e_j}) with $f_{j,0}^{(\ell)}$ replaced by $f_{j,N}^{(\ell)}$:
\begin{align*}
f_{j,N}^{(\ell)}(r)=&-\int_{0}^{3\pi/2}k_{1}^{2}\Big[u_{\ell}(r,\theta)-\sum_{0\leq n+m\leq N-1}a_{n,m}^{(\ell)}r^{2(n+m)}\psi^{(\ell)}_{2n}(\theta)\Big] \overline{\varphi_{j}^{(\ell)}(\theta)}{\rm d}_{\theta} \\
&-\int_{-\pi/2}^{0}k_{2}^{2}\Big[u_{\ell}(r,\theta)-\sum_{0\leq n+m\leq N-1}a_{n,m}^{(\ell)}r^{2(n+m)}\psi_{2n}^{(\ell)}(\theta)\Big] \overline{\varphi_{j}^{(\ell)}(\theta)}{\rm d}_{\theta};
\end{align*}
$l_{N}:=\max\{\eta_j: 2N<\eta_{j}<2N+2\}$;
\begin{align}\nonumber
d_{j,2n}=&-\left[k^{2}_{2}\int^{0}_{-\pi/2}\psi_{2n}^{(1)}(\theta)\overline{\varphi_{j}^{(1)}(\theta)}\,{\rm d}_{\theta} +k_{1}^{2}\int_{0}^{3\pi/2}\psi_{2n}^{(1)}(\theta)\overline{\varphi_{j}^{(1)}(\theta)}\,{\rm d}_{\theta}\right] \vspace{0.2cm}\\ \label{dj2n1}
=&\left\{\begin{array}{lll}
-k_{1}^{2}+(k_{1}^{2}-k_{2}^{2})\int^{0}_{-\pi/2}|\psi_{2n}^{(1)}(\theta)|^{2}\,{\rm d}_{\theta}, && \mbox{if} \quad \eta_{j}=2n,\vspace{0.2cm}\\
(k_{1}^{2}-k_{2}^{2})\int^{0}_{-\pi/2}\psi_{2n}^{(1)}(\theta)\overline{\varphi_{j}^{(1)}(\theta)}\,{\rm d}_{\theta}, &&\mbox{if}\quad\eta_{j}\neq2n,
\end{array}\right.
\end{align}
for $0\leq n\leq N-1$; $D_{2n,2n}:=d_{j,2n}$ when $\eta_{j}=2n$.

Note that the above induction hypothesis with $N=1$ has been proved in Step one. Now we want to prove that (\ref{u1N}) holds for $N+1$. By the definition of $e^{(\ell)}_{j,N}$, straightforward calculations show that
\be\label{ejnl}
e^{(\ell)}_{j,N}(r)=\left\{\begin{array}{lll}
\frac{r^{2N+2}}{(2N+2)^{2}-\eta_{j}^2}\sum\limits_{n+m=N}a_{n,m}^{(\ell)}d_{j,2n}+o(r^{2N+3}), &\mbox{if}\quad \eta_{j}\neq 2N+2,\vspace{0.2cm}\\
\frac{a_{N,0}^{(\ell)}\,D_{2N+2,2N}}{4N+2}r^{2N+2}\ln r+o(r^{2N+3}), & \mbox{if}\quad \eta_{j}=2N+2.
\end{array}\right.
\en
Here $D_{2N+2,2N}:=d_{j,2N}$ with $\eta_{j}=2N+2$ and $d_{j, 2N}$ is defined analogously by \eqref{dj2n1}.

Using the relations $u_{1}(r,\theta)=u_{2}(r,\theta)$, $\partial_{\theta}u_{1}(r,\theta)=\partial_{\theta}u_{2}(r,\theta)$ ($\theta\in[0,\pi]$), we deduce from the expressions of $u_l$ in \eqref{u1N} that
\begin{equation*}
\alpha_{j}^{(1)}\varphi_{j}^{(1)}(\theta)=\alpha_{j}^{(2)}\varphi_{j}^{(2)}(\theta), \quad \alpha_{j}^{(1)}\big[\varphi_{j}^{(1)}(\theta)\big]^{'} =\alpha_{j}^{(2)}\big[\varphi_{j}^{(2)}(\theta)\big]^{'},\quad  \forall \,\theta \in(0,\pi),~\eta_{j}\in(2N,2N+2).
\end{equation*}
Similarly, we can obtain an equation system about the unknowns $\alpha_{j}^{(1)}$ and $\alpha_{j}^{(2)}$, where the determinant of coefficient matrix is still not equal to zero for $2N<\eta_{j}<2N+2$. Consequently, we achieve that $\alpha_{j}^{(1)}=\alpha_{j}^{(2)}=0$ for $2N<\eta_{j}<2N+2$. Inserting this into (\ref{u1N}) gives
\begin{align*}
u_{\ell}(r,\theta)=&\sum_{0\leq n+m\leq N}a_{n,m}^{(\ell)}r^{2(n+m)} \psi_{2n}^{(\ell)}(\theta)+\sum_{2N+2\leq\eta_{j}<2N+4}\alpha_{j}^{(\ell)}r^{\eta_{j}}\varphi_{j}^{(\ell)}(\theta)\\
&+\sum_{\eta_{j}\geq0}e_{j,N}^{(\ell)}(r)\varphi_{j}^{(\ell)}(\theta)+o(r^{l_{N}}),\quad \ell=1,2.
\end{align*}
Using the relations in (\ref{ejnl}), we can obtain
\begin{align*}
u_{\ell}(r,\theta)=&\sum_{0\leq n+m\leq N}a_{n,m}^{(\ell)}r^{2(n+m)}\psi_{2n}^{(\ell)}(\theta)+r^{2N+2}\sum_{0\leq n\leq N-1}^{n+m=N+1}a_{n,m}^{(\ell)}\psi_{2n}^{(\ell)}(\theta)
+a_{N+1,0}^{(\ell)}r^{2N+2}\psi_{2N+2}^{(\ell)}(\theta)\\
&+\frac{a_{N,0}^{(\ell)}\,D_{2N+2,2N}}{4N+2}r^{2N+2}\ln r \,\psi_{2N+2}^{(\ell)}(\theta)
+\sum_{\eta_{j}\neq 2N+2}\frac{a_{N,0}^{(\ell)}\,d_{j,2N}}{(2N+2)^{2}-\eta_{j}^2}r^{2N+2}\varphi_{j}^{(\ell)}(\theta) \\
&+\sum_{2N+2<\eta_{j}<2N+4}\alpha_{j}^{(\ell)}r^{\eta_{j}}\varphi_{j}^{(\ell)}(\theta)+o(r^{l_{N+1}}),\quad \ell=1,2.
\end{align*}
Here, $a_{N+1,0}^{(\ell)}:=\alpha_{j}^{(\ell)}$ for $\eta_{j}=2N+2$, $l_{N+1}:=\max\{\eta_j: 2N+2<\eta_{j}<2N+4\}$ and
\begin{equation} \label{i1}
a_{n,m}^{(\ell)}=\frac{a_{n,m-1}^{(\ell)}\, D_{2n,2n}}{(2N+2)^{2}-(2n)^{2}}, \quad \forall \,0\leq n\leq N-1,\,n+m=N+1. 
\end{equation}
Applying the induction hypothesis $a_{n,m}^{(1)}\psi_{2n}^{(1)}(\theta)=a_{n,m}^{(2)}\psi_{2n}^{(2)}(\theta)$ for all $0\leq n+m\leq N$ into (\ref{i1}), we have
\begin{equation} \label{i2}
a_{n,m}^{(1)}\psi_{2n}^{(1)}(\theta)=a_{n,m}^{(2)}\psi_{2n}^{(2)}(\theta), \quad \forall \,0\leq n\leq N-1,\,n+m=N+1.
\end{equation}

Comparing the expressions of $u_{1}$ and $u_{2}$ and using the fact that $u_{1}=u_{2}$ for all $\theta\in(0,\pi)$ yields
\begin{equation*}
a_{N,0}^{(1)}\,D_{2N+2,\,2N}\,\psi_{2N+2}^{(1)}(\theta)=a_{N,0}^{(2)}\,D_{2N+2,\,2N}\, \psi_{2N+2}^{(2)}(\theta),
\end{equation*}
and
\begin{align*}
&a_{N+1,0}^{(1)}\,\psi_{2N+2}^{(1)}(\theta)+\sum_{\eta_{j}\neq2N+2}\frac{a_{N,0}^{(1)}\,d_{j,2N}}{(2N+2)^{2}-\eta_{j}^2}\varphi_{j}^{(1)}(\theta) \\
=&a_{N+1,0}^{(2)}\,\psi_{2N+2}^{(2)}(\theta)+\sum_{\eta_{j}\neq2N+2}\frac{a_{N,0}^{(2)}\,d_{j,2N}}{(2N+2)^{2}-\eta_{j}^2}\varphi_{j}^{(2)}(\theta).
\end{align*}
Since $a_{N,0}^{(1)}=(-1)^{N}a_{N,0}^{(2)}$, $\psi_{2N+2}^{(2)}(\theta)=(-1)^{N+1}\psi_{2N+2}^{(1)}(\theta)$, we conclude that
\begin{equation*}
a_{N,0}^{(\ell)}\,D_{2N+2,2N}\,\psi_{2N+2}^{(\ell)}(\theta)=0,
\end{equation*}
and
\begin{equation*}
\big[a_{N+1,0}^{(1)}-(-1)^{N+1}a_{N+1,0}^{(2)}\big]\psi_{2N+2}^{(1)}(\theta)+\sum_{\eta_{j}\neq2N+2}\frac{a_{N,0}^{(1)}d_{j,2N}}{(2N+2)^{2}-\eta_{j}^2} \left[\varphi_{j}^{(1)}(\theta)-(-1)^{N}\varphi_{j}^{(2)}(\theta)\right]=0.
\end{equation*}
Using Lemma \ref{Lema1} and the linear independence of trigonometric functions, we conclude that
\begin{equation} \label{i3}
a_{N+1,0}^{(1)}\psi_{2N+2}^{(1)}(\theta)=a_{N+1,0}^{(2)}\psi_{2N+2}^{(2)}(\theta),  
\end{equation}
and
\begin{equation*}
a_{N,0}^{(1)}\,d_{j,2N}=\left\{\begin{array}{lll}
0, && \mbox{if}\quad \varphi_j^{(1)}(\theta)\neq\varphi_j^{(2)}(\theta),~N\mbox{ is an even number}, \\
0, && \mbox{if}\quad \varphi_j^{(1)}(\theta)+\varphi_j^{(2)}(\theta)\neq0,~N\mbox{ is an odd number}.
\end{array}\right.
\end{equation*}
Recalling Lemma \ref{Lema1} and the definition of $d_{j,2N}$, we find that
\begin{equation*}
d_{j,2N}=\left\{\begin{array}{lll}
0, && \mbox{if}\quad \eta_{j}=4l \mbox{ and } N \mbox{ is an even number},~l\neq N/2,\\
0, && \mbox{if}\quad \eta_{j}=4l+2 \mbox{ and } N \mbox{ is an odd number},~l\neq(N-1)/2.
\end{array}\right.
\end{equation*}
Based on the above results, we conclude that
\begin{equation*}
a_{N,0}^{(\ell)}\,d_{j,2N}=0, \quad {\rm for}~\eta_{j}\neq 2N,~\ell=1,2.  
\end{equation*}
Combining the previous equalities with the following two induction hypothesis
\begin{equation*}
\left\{\begin{array}{lll}
a_{n,m}^{(\ell)}=\frac{a_{n,m-1}^{(\ell)}\, D_{2n,2n}}{(2N)^{2}-(2n)^{2}}, \quad 
~\forall \,n+m=N,\,0\leq n\leq N-1, \vspace{0.2cm} \\
a_{n,m}^{(\ell)}\,d_{j,2n}=0, \quad {\rm for}~\eta_{j}\neq 2n,~\forall \,0\leq n+m\leq N-1,  
\end{array}\right.
\end{equation*}
we find that
\begin{equation} \label{i4}
a_{n,m}^{(\ell)}\,d_{j,2n}=0, \quad {\rm for}~\eta_{j}\neq 2n,~\forall \,0\leq n\leq N,\,n+m=N.   
\end{equation}
Hence,
\begin{align} \label{i5}
u_{\ell}(r,\theta)=&\sum\limits_{0\leq n+m\leq N+1}a_{n,m}^{(\ell)}r^{2(n+m)}\psi_{2n}^{(\ell)}(\theta)
+\sum\limits_{2N+2<\eta_{j}<2N+4}\alpha_{j}^{(\ell)}r^{\eta_{j}}\varphi_{j}^{(\ell)}(\theta) \\ \nonumber
&+\sum\limits_{\eta_{j}\geq0}e_{j,N+1}^{(\ell)}(r)\varphi_{j}^{(\ell)}(\theta)+o(r^{l_{N+1}}), \quad \ell=1,2,
\end{align}
where $e^{(\ell)}_{j,N+1}$ is defined in the same way as $e^{(\ell)}_{j,N}$, $D_{2N,2N}$ equals to $d_{j,2N}$ when $\eta_{j}=2N$ and
\begin{equation} \label{i7}
a_{N,1}^{(\ell)}=\frac{a_{N,0}^{(\ell)}\,D_{2N,2N}}{(2N+2)^{2}-(2N)^2},  \quad \ell=1,2.
\end{equation}
Then, the relation $a_{N,0}^{(1)}\psi_{2N}^{(1)}(\theta)=a_{N,0}^{(2)}\psi_{2N}^{(2)}(\theta)$ gives that
\begin{equation} \label{i8}
a_{N,1}^{(1)}\psi_{2N}^{(1)}(\theta)=a_{N,1}^{(2)}\psi_{2N}^{(2)}(\theta).
\end{equation}
Therefore, relations (\ref{i1})--(\ref{i8}) imply that (\ref{u1N}) still holds for $N+1$.

{\bf Step 3:} By the induction argument, we know that (\ref{u1N}) holds for any $N\in\mathbb{N}$£¬ which implies  (\ref{equa2}) for all $l\in\mathbb{N}$. Hence, the proof of (\ref{equa}) is complete.
\end{proof}

By Lemma \ref{Lema2}, we have
\begin{align*}
u_{1}(r,\theta)
=\left\{\begin{array}{lll}
\sum\limits_{n+m\geq0}a_{n,m}^{(1)}r^{2(n+m)}[A_{n}^{-}\cos(2n\theta)+B_{n}^{-}\sin(2n\theta)], \quad  \theta\in(-\pi/2,0),\vspace{0.2cm}\\
\sum\limits_{n+m\geq0}a_{n,m}^{(1)}r^{2(n+m)}[A_{n}^{+}\cos(2n\theta)+B_{n}^{+}\sin(2n\theta)], \quad  \theta\in(0,3\pi/2).
\end{array}\right.
\end{align*}
Now,  using the transmission condition of $u_1$ on $\Pi_{\ell}$ one can repeat the proof in the proof of
Lemma \ref{Th1} to obtain $u_1\equiv 0$ around $O$, which is impossible. This excludes the case two.

\subsection{Case three}

Assume there exists a corner $O$ of $\Lambda_2$ such that $O\in\Lambda_1$, but $O$ is not a corner point of $\Lambda_1$. Without loss of generality, we suppose that $O$ is located on a vertical line segment of $\Lambda_1$ (see Figure \ref{fig3}).
\begin{figure}[h] \label{fig3}
  \centering
  \begin{tikzpicture}[scale=0.75]
    \draw[gray, thick] (-2,2) -- (0,2)-- (0,-2) -- (3,-2) -- (3,1) --(5,1) --(5,-1) --(7,-1)--(7,1.5)--(9,1.5);
  \draw[gray, thick, dashed] (-2,2) -- (0,2) -- (0,0)-- (3,0) -- (3,1) -- (5,1) --(5,0.3) --(7,0.3) --(7,1.5)--(9,1.5);
  \draw[gray,thick] (0,0) circle (1cm); \draw (-0.2,0.2) node{$O$};\filldraw[black] (0,0) circle (0.8pt);
  \draw (-1,2.2) node{$\Lambda_{1}$}; \draw (2,0.2) node{$\Lambda_{2}$};
  \end{tikzpicture}
  \caption{Case three: $O\in\Lambda_1\cap\Lambda_2$ is a corner of $\Lambda_2$ but not a corner of $\Lambda_1$.}
\end{figure}
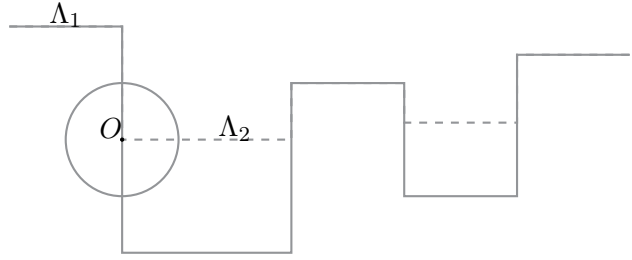 Choose $R>0$ sufficiently small such that the disk $B_{R}$ does not contain any other corners. We can see that $u_{1},u_{2}\in H^{1+s}(B_{R})$ ($0\leq s<1/2$) are solutions to the systems
\begin{align}  \label{a1}
&\left\{\begin{array}{lll}
 \Delta u_{1}+k_{1}^{2}u_{1}=0, & \quad \mbox{in} \quad \theta\in[0,\pi/2)\cup(3\pi/2,2\pi], \vspace{0.2cm} \\
 \Delta u_{1}+k_{2}^{2}u_{1}=0,& \quad \mbox{in} \quad \theta\in(\pi/2,3\pi/2), \vspace{0.2cm} \\
 u_{1}^{+}=u_{1}^{-}, \quad \partial_\nu^{+}u_{1}=\lambda\,\partial_\nu^{-}u_{1},  & \quad \mbox{on} \quad \theta=\pi/2,\,3\pi/2,
 \end{array}\right. \\
\label{a2}
&\left\{\begin{array}{lll}
 \Delta u_{2}+k_{1}^{2}u_{2}=0, & \quad \mbox{in} \quad \theta\in(0,\pi/2), \vspace{0.2cm} \\
 \Delta u_{2}+k_{2}^{2}u_{2}=0,& \quad \mbox{in} \quad \theta\in(\pi/2,2\pi), \vspace{0.2cm} \\
 u_{2}^{+}=u_{2}^{-}, \quad \partial_\nu^{+}u_{2}=\lambda\,\partial_\nu^{-}u_{2},  & \quad \mbox{on} \quad \theta=0,\,\pi/2.
 \end{array}\right.
\end{align}

By Proposition \ref{prop} (ii), the Cauchy data $(u_{1}^{+}, \partial_{\nu}u_{1}^{+}) $ are analytic on $B_{R}\cap\Lambda_{2}$. Then, the coincidence $u_1(r,\theta)=u_2(r,\theta)$ for all $\theta\in [0,\pi/2]$ implies that $u_{2}^{+}$ and $\partial_\nu u_{2}^{+}$ are both analytic on $B_{R}\cap\Lambda_{2}$. By the Cauchy-Kowalewski theorem in a piecewise analytic domain (refer to Lemma 2.1 in \cite{LHY}), we conclude that there exists $R_{1}\in(0,R)$ such that $u_{2}$ can be extended analytically from $B_{R_{1}}\cap\Omega_{\Lambda_{2}}^{+}$ to $B_{R_{1}}$ and the extended function $w_{2}$ satisfies that
\begin{equation*}
\left\{\begin{array}{lll}
\Delta w_{2}+k_{1}^{2}w_{2}=0, &\quad {\rm in}\quad B_{R_{1}}, \vspace{0.2cm} \\
 w_{2}=u_{2}^{+}, \quad \partial_\nu w_{2}=\partial_\nu u_{2}^{+},  & \quad \mbox{on} \quad B_{R_{1}}\cap\Lambda_{2}.
\end{array}\right.
\end{equation*}
Recalling the transmission boundary in (\ref{a2}) and the fact that $\lambda$ is a constant, we also find that $u_{2}^{-}$ and $\partial_\nu u_{2}^{-}$ are both analytic on $B_{R}\cap\Lambda_{2}$. Similarly, the solution $u_{2}$ can be extended analytically from $B_{R_{2}}\cap\Omega_{\Lambda_{2}}^{-}$ to $B_{R_{2}}$ ($R_{2}\in(0,R_{1})$) by the Cauchy-Kowalewski theorem. Denote by $v_{2}$ the extended function in $B_{R_{2}}$, which satisfies
\begin{equation*}
\left\{\begin{array}{lll}
\Delta v_{2}+k_{2}^{2}v_{2}=0, &\quad {\rm in}\quad B_{R_{2}}, \vspace{0.2cm} \\
 v_{2}=u_{2}^{-}, \quad \partial_\nu v_{2}=\partial_\nu u_{2}^{-},  & \quad \mbox{on} \quad B_{R_{2}}\cap\Lambda_{2}.
\end{array}\right.
\end{equation*}
Again using the transmission conditions in (\ref{a2}) yields
\begin{equation*}
\left\{\begin{array}{lll}
\Delta w_{2}+k_{1}^{2}w_{2}=0, &\quad {\rm in}\quad B_{R_{2}},\vspace{0.2cm} \\
\Delta v_{2}+k_{2}^{2}v_{2}=0, &\quad {\rm in}\quad B_{R_{2}}, \vspace{0.2cm} \\
w_{2}=v_{2}, \quad \partial_\nu w_{2}=\lambda\partial_\nu v_{2},  & \quad \mbox{on} \quad B_{R_{2}}\cap\Lambda_{2}.
\end{array}\right.
\end{equation*}
Since $k_{1}\neq k_{2}$, we obtain $w_{2}=v_{2}\equiv0$ in $B_{R_{2}}$ by Lemma \ref{Th1}, that is, $u_{2}\equiv0$ in $B_{R_{2}}$. This together with the unique continuation leads to $u_{2}\equiv0$ in $B_{R}$, which is impossible.

\section{Appendix}
This section is devoted to the regularity problem around a corner point and up to the flat interface,
and the well-posedness of solutions to the forward scattering (\ref{a})--(\ref{rad1}).

\subsection{Regularity around a corner}
Firstly, we investigate the regularity of a solution to the transmission problem of the Helmholtz equation in a right angle domain (see the Figure \ref{ff1}).
\begin{figure}[h] \label{ff1}
  \centering
  \begin{tikzpicture}
  \filldraw[gray,opacity=0.5] (0,0)--(0,-2.5) arc (270:360:2.5);
  \filldraw[gray!30,opacity=0.5] (0,0)--(2.5,0) arc (0:270:2.5);
  \draw[gray, thick,->] (0,-3) -- (0,3.5); \draw (0.1,3.7) node{$x_2$};
  \draw[gray, thick,->] (-3,0) -- (3.5,0); \draw (3.8,0) node{$x_1$};
  \filldraw[black] (0,0) circle (0.5pt); \draw (-0.2,0.2) node{$O$}; \filldraw[black] (0,0) circle (0.8pt);
  \draw[gray,thick] (0,0) circle (2.5cm); \draw (2.2,2.2) node{$B_{R}$};
  \draw (1,-1) node{$\Omega_{2}$}; \draw (-1,1) node{$\Omega_{1}$};
  \draw[red, line width=1] (0,0) -- (2.5,0); \draw (1.25,0.2) node{$\Pi_{1}$};
  \draw[red, line width=1] (0,0) -- (0,-2.5); \draw (-0.25,-1.25) node{$\Pi_{2}$};
  \end{tikzpicture}
  \caption{Sketch map of $\Omega_{\ell}$ and $\Pi_{\ell}$ ($\ell=1,2$).}
\end{figure}
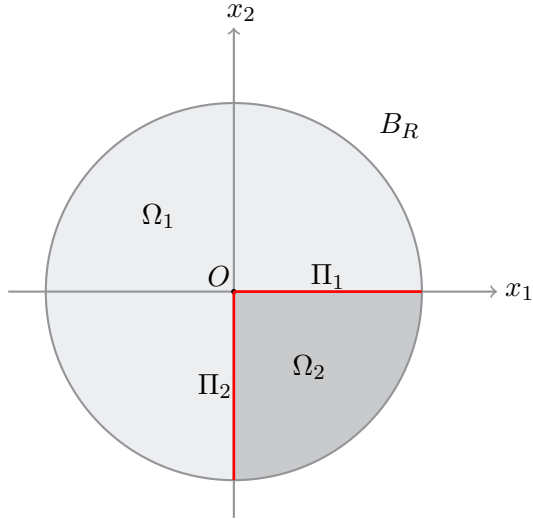
\begin{theorem} \label{Theo1}
The solution $\hat{u}$ to (\ref{ra}) has the regularity $\hat{u}\in H^{1+s}(B_{R})\cap H^{1+2/3}(\Omega_{\ell})$ for any $0\leq s<1/2$ ($\ell=1,2$).
\end{theorem}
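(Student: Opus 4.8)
The plan is to reduce the transmission system \eqref{ra} to a single scalar divergence--form equation, invoke a Kondrat'ev--type corner decomposition whose singular exponents are the eigenvalues of the Sturm--Liouville problem \eqref{b}, and then read off the Sobolev regularity from the fact, recorded in \eqref{eta1}, that the smallest positive exponent $\eta_1$ exceeds $2/3$.

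First I would rewrite \eqref{ra} in the divergence form $\nabla\cdot(\hat a(\theta)\nabla\hat u)+\hat\kappa(\theta)\hat u=0$ in $B_R$ with the piecewise--constant coefficients $\hat a,\hat\kappa$ introduced just before the statement, so that the interface is the union of the two straight rays $\Pi_1\cup\Pi_2$ through the single corner $O$. Since $\hat u\in H^1(B_R)$ (the weak solution supplied by Proposition~\ref{prop}), the lower--order term $f:=-\hat\kappa(\theta)\hat u$ lies in $L^2(B_R)$. For such a second--order elliptic operator on a planar domain with one conical point and straight interfaces through it, the Kondrat'ev/Dauge/Nicaise shift--and--decomposition theorem (see e.g.\ \cite{ES98,Pe2005,Pe2001,KMR}) applies with $L^2$ right--hand side and yields
\[
\hat u=\hat w+\sum_{j:\,\eta_j\in(0,1)}c_j\,r^{\eta_j}\varphi_j(\theta)(\ln r)^{p_j}\quad\text{in }B_R,\qquad \hat w\in H^2(\Omega_1)\cap H^2(\Omega_2),
\]
where $(\eta_j,\varphi_j)$ are exactly the eigenpairs of \eqref{b}, the multiplicities $p_j\in\{0,1,\dots\}$ are finite in number, and only exponents $\eta_j\in(0,1)$ contribute to the singular part (the eigenvalue $\eta_0=0$, with constant eigenfunction, is absorbed into $\hat w$).

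The crucial input is then that \emph{every} positive eigenvalue of \eqref{b} is at least $\eta_1>2/3$. To see this I would solve the homogeneous $4\times4$ system for the coefficients $A_j^{\pm},B_j^{\pm}$ in the Ansatz \eqref{varphi} coming from the four transmission relations in \eqref{b}: a nontrivial eigenfunction exists precisely when the determinant of that system vanishes, and a (somewhat tedious) simplification of this characteristic determinant identifies $\eta_1=\tfrac1\pi\arccos\!\bigl(-\tfrac{\lambda^{2}+6\lambda+1}{2(\lambda+1)^{2}}\bigr)$, as in \eqref{eta1}, as its smallest positive root. Since $g(\lambda):=\tfrac{\lambda^{2}+6\lambda+1}{2(\lambda+1)^{2}}$ satisfies $g(\lambda)=g(1/\lambda)$ and takes values in $(\tfrac12,1)$ for $\lambda\in(0,\infty)\setminus\{1\}$, one has $\arccos(-g(\lambda))\in(2\pi/3,\pi)$, hence $\eta_1\in(2/3,1)$ strictly, and monotonicity of the characteristic function on $(0,\eta_1)$ (or the explicit factorisation) confirms that no eigenvalue lies below $\eta_1$. (At $\lambda=1$ one gets $g=1$ and $\eta_1=1$, recovering the $H^2$--regularity of the TE case.)

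Finally I would convert the exponent bound into the claimed membership, and this last step is routine. On each closed sector $\overline{\Omega_\ell}$ the eigenfunction $\varphi_j$ is analytic, so the only singularity of $r^{\eta_j}\varphi_j(\theta)(\ln r)^{p_j}$ in $\overline{\Omega_\ell}$ sits at $r=0$, and such a function lies in $H^{1+s}(\Omega_\ell)$ for every $s<\eta_j$; since $\eta_j\ge\eta_1>2/3$, each singular term — and with it $\hat u$ — lies in $H^{1+2/3}(\Omega_\ell)$, while $\hat w\in H^{2}(\Omega_\ell)\subset H^{1+2/3}(\Omega_\ell)$. For the global statement, $\hat u$ is continuous across $\Pi_1\cup\Pi_2$ by the first transmission condition and is piecewise $H^{1+s}$ with $s<1/2$; since fractional Sobolev spaces $H^{s}$ with $s<1/2$ carry no trace and thus require no interface matching, the Gagliardo--seminorm gluing lemma gives $\partial_i\hat u\in H^{s}(B_R)$, i.e.\ $\hat u\in H^{1+s}(B_R)$ for all $s<1/2$. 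The main obstacle is therefore the eigenvalue analysis behind \eqref{eta1}: extracting and simplifying the $4\times4$ characteristic determinant for \eqref{b}, proving that the root in \eqref{eta1} is the \emph{smallest} positive one, and verifying the strict bound $\eta_1>2/3$ uniformly over $\lambda>0$, $\lambda\neq1$; everything else (the corner decomposition and the $r^{\eta}$ embeddings) is standard.
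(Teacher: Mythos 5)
Your proposal is correct and follows essentially the same route as the paper: reduce \eqref{ra} to the divergence form $\nabla\cdot(\hat a\nabla\hat u)+\hat\kappa\hat u=0$, apply the Kondrat'ev-type decomposition with singular exponents given by the eigenvalues of \eqref{b}, and show via the $4\times4$ characteristic determinant that the only eigenvalue in $(0,1)$ is $\eta_1=\tfrac1\pi\arccos\bigl(-\tfrac{\lambda^{2}+6\lambda+1}{2(\lambda+1)^{2}}\bigr)>\tfrac23$. The paper's proof consists precisely of that determinant computation (which you defer but describe accurately), while your final step converting the exponent bound into the stated $H^{1+2/3}(\Omega_\ell)$ and $H^{1+s}(B_R)$ memberships is carried out in slightly more detail than in the paper, which leaves it implicit.
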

\begin{proof}
For the sake of notational simplicity, we write $\varphi(\theta):=\varphi_{j}(\theta)$, $\eta:=\eta_{j}$ for some fixed $j$. A general solution to (\ref{b}) takes the form
\begin{equation} \label{func}
\varphi(\theta)=\left\{\begin{array}{l}
A^{+}\cos(\eta\theta)+B^{+}\sin(\eta\theta),\quad \theta\in(0,3\pi/2),  \vspace{0.1cm}\\
A^{-}\cos(\eta\theta)+B^{-}\sin(\eta\theta),\quad \theta\in(-\pi/2,0).
\end{array}\right.
\end{equation}
Using the transmission boundary conditions in (\ref{b}) yields
\begin{equation*}
A^{+}=A^{-}, \quad
A^{+}\cos(3\pi\eta/2)+B^{+}\sin(3\pi\eta/2)=A^{-}\cos(\pi\eta/2)-B^{-}\sin(\pi\eta/2).
\end{equation*}
Since
\begin{equation*}
\varphi^{'}(\theta)=\left\{\begin{array}{l}
-\eta A^{+}\sin(\eta\theta)+\eta B^{+}\cos(\eta\theta),\quad \theta\in(0,3\pi/2), \vspace{0.1cm} \\
-\eta A^{-}\sin(\eta\theta)+\eta B^{-}\cos(\eta\theta),\quad \theta\in(-\pi/2,0),
\end{array}\right.
\end{equation*}
 we have
\begin{equation*}
B^{+}=\lambda B^{-},\quad
-A^{+}\sin(3\pi\eta/2)+B^{+}\cos(3\pi\eta/2)=\lambda\,\big[A^{-}\sin(\pi\eta/2)+B^{-}\cos(\pi\eta/2)\big].
\end{equation*}
That is, $(A^{+},A^{-},B^{+},B^{-})$ satisfies the following 4-by-4 algebraic system:
\begin{equation*}
\left(\begin{array}{cccc}
1 & -1 & 0 & 0 \\
\cos(3\pi\eta/2) & -\cos(\pi\eta/2) & \sin(3\pi\eta/2) & \sin(\pi\eta/2) \\
0 & 0 & 1 & -\lambda \\
\sin(3\pi\eta/2) & \lambda\sin(\pi\eta/2) & -\cos(3\pi\eta/2) & \lambda\cos(\pi\eta/2)
\end{array}\right)\left(\begin{array}{cccc} A^{+}\\ A^{-}\\ B^{+}\\ B^{-}\end{array}\right) =\left(\begin{array}{cccc} 0\\ 0\\ 0\\ 0\end{array}\right).
\end{equation*}
We denote the fourth order matrix on the left by $M$. Then simple calculation shows that
\begin{align*}
|M|=&\left|\begin{array}{cccc}
1 & -1 & 0 & 0 \\
\cos(3\pi\eta/2) & -\cos(\pi\eta/2) & \sin(3\pi\eta/2) & \sin(\pi\eta/2) \\
0 & 0 & 1 & -\lambda \\
\sin(3\pi\eta/2) & \lambda\sin(\pi\eta/2) & -\cos(3\pi\eta/2) & \lambda\cos(\pi\eta/2)
\end{array}\right|   \\
=&\left|\begin{array}{ccc}
\cos(3\pi\eta/2)-\cos(\pi\eta/2) & \sin(3\pi\eta/2) & \sin(\pi\eta/2) \\
0 & 1  & -\lambda \\
\lambda\sin(\pi\eta/2)+\sin(3\pi\eta/2) & -\cos(3\pi\eta/2) & \lambda\cos(\pi\eta/2)
\end{array}\right|  \\
=&\left|\begin{array}{ccc}
\cos(3\pi\eta/2)-\cos(\pi\eta/2) & 0 & \sin(\pi\eta/2)+\lambda\sin(3\pi\eta/2) \\
0 & 1  & -\lambda \\
\lambda\sin(\pi\eta/2)+\sin(3\pi\eta/2) & 0 & \lambda\cos(\pi\eta/2)-\lambda\cos(3\pi\eta/2)
\end{array}\right|  \\
=&\left|\begin{array}{ccc}
\cos(3\pi\eta/2)-\cos(\pi\eta/2) & \sin(\pi\eta/2)+\lambda\sin(3\pi\eta/2) \\
\lambda\sin(\pi\eta/2)+\sin(3\pi\eta/2)  & \lambda\cos(\pi\eta/2)-\lambda\cos(3\pi\eta/2)
\end{array}\right|.
\end{align*}
That is,
\begin{align*} |M|&=-\lambda\big[\cos(3\pi\eta/2)-\cos(\pi\eta/2)\big]^{2}-\big[\lambda\sin(\pi\eta/2)+\sin(3\pi\eta/2)\big]\big[\sin(\pi\eta/2)+\lambda\sin(3\pi\eta/2)\big]\\
&=2\lambda\cos(3\pi\eta/2)\cos(\pi\eta/2)-(\lambda^{2}+1)\sin(3\pi\eta/2)\sin(\pi\eta/2)-2\lambda \\
&=(\lambda+1)^{2}\cos^{2}(\pi\eta)-\frac{(\lambda-1)^{2}}{2}\cos(\pi\eta)-\frac{\lambda^{2}+6\lambda+1}{2}=0,
\end{align*}
which implies that
\begin{align*}
\cos(\pi\eta)=-\frac{\lambda^{2}+6\lambda+1}{2(\lambda+1)^{2}} \quad {\rm or} \quad \cos(\pi\eta)=1.
\end{align*}
Hence,
\begin{equation*}
\eta =\frac{1}{\pi}\arccos\Big(-\frac{\lambda^{2}+6\lambda+1}{2(\lambda+1)^{2}}\Big) \quad {\rm or}\quad \eta=2l, \quad l\in\mathbb{N}.
\end{equation*}
Note that, $\eta\in(0,1)$ and
\begin{equation*}
\frac{\lambda^{2}+6\lambda+1}{2(\lambda+1)^{2}} =\frac{(\lambda+1)^{2}+4\lambda}{2(\lambda+1)^{2}} =\frac{1}{2}+\frac{2\lambda}{(\lambda+1)^{2}}\in (1/2,1), \quad {\rm i.e.}~ -1<\cos(\pi\eta)<-\frac{1}{2}.
\end{equation*}
Therefore,
\begin{equation*}
\eta=\frac{1}{\pi} \arccos\Big(-\frac{\lambda^{2}+6\lambda+1}{2(\lambda+1)^{2}}\Big)>\frac{2}{3}.
\end{equation*}
The proof is complete.
\end{proof}
\subsection{Regularity up the flat interface}
In this subsection we suppose that the angle is $\pi$ and consider the transmission problem
\begin{equation} \label{rb}
\left\{\begin{array}{lll}
\Delta v_{\ell}+k_{\ell}^{2}v_{\ell}=0,&\quad \mbox{in}\quad\widetilde{\Omega}_{\ell}, \\
v_{1}=v_{2}, \quad \partial_{\nu}v_{1}=\lambda\partial_{\nu}v_{2},&\quad \mbox{on}\quad\widetilde{\Pi}_{\ell},
\end{array}\right.
\end{equation}
where $k_{\ell}$ are constants and $k_{1}\neq k_{2}$, the unit normal vector $\nu$ at $\widetilde{\Pi}_{\ell}$ is pointing into $\widetilde{\Omega}_{1}$. The two semi-circles $\widetilde{\Omega}_{\ell}$ and their boundaries $\widetilde{\Pi}_{\ell}$ ($\ell=1,2$) are defined as (see the Figure \ref{ff2}):
\ben
&&\widetilde{\Omega}_{1}:=\{(r,\theta):0<r<R,~0\leq\theta<\pi/2~{\rm or}~3\pi/2<\theta\leq2\pi\}, \quad \widetilde{\Pi}_{1}:=\{(r,\pi/2):0\leq r\leq R\},\\
&&\widetilde{\Omega}_{2}:=\{(r,\theta):0<r<R,~\pi/2<\theta<3\pi/2\},  \quad \quad\quad\quad\quad\quad\quad~\widetilde{\Pi}_{2}:=\{(r,3\pi/2):0\leq r\leq R\}.
\enn
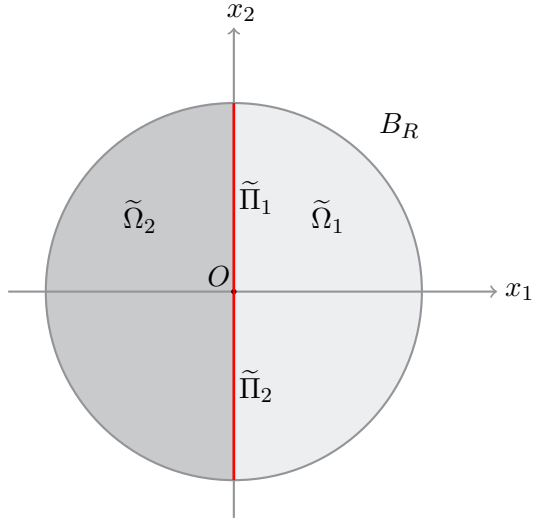
\begin{figure}[h]  \label{ff2}
  \centering
  \begin{tikzpicture}
  \filldraw[gray!30,opacity=0.5] (0,0)--(0,-2.5) arc (270:450:2.5);
  \filldraw[gray,opacity=0.5] (0,0)--(0,2.5) arc (90:270:2.5);
  \draw[gray, thick,->] (0,-3) -- (0,3.5); \draw (0.1,3.7) node{$x_2$};
  \draw[gray, thick,->] (-3,0) -- (3.5,0); \draw (3.8,0) node{$x_1$};
  \filldraw[black] (0,0) circle (0.5pt); \draw (-0.2,0.2) node{$O$};\filldraw[black] (0,0) circle (0.8pt);
  \draw[gray,thick] (0,0) circle (2.5cm); \draw (2.2,2.2) node{$B_{R}$};
  \draw (-1.25,1) node{$\widetilde{\Omega}_{2}$}; \draw (1.25,1) node{$\widetilde{\Omega}_{1}$};
  \draw[red, line width=1] (0,0) -- (0,2.5); \draw (0.3,1.25) node{$\widetilde{\Pi}_{1}$};
  \draw[red, line width=1] (0,0) -- (0,-2.5); \draw (0.3,-1.25) node{$\widetilde{\Pi}_{2}$};
  \end{tikzpicture}
  \caption{Sketch map of $\widetilde{\Omega}_{\ell}$ and $\widetilde{\Pi}_{\ell}$ ($\ell=1,2$).}
\end{figure}

In order to rewrite the equation (\ref{rb}) into the divergence form, we define
\begin{equation*}
\tilde{a}(\theta):=\left\{\begin{array}{ll}
1,  \quad {\rm in}~~\widetilde{\Omega}_{1}, \\
\lambda, \quad {\rm in}~~\widetilde{\Omega}_{2},
\end{array}\right. \quad\quad \tilde{\kappa}(\theta):=\left\{\begin{array}{ll}
k_{1}^{2},\quad & {\rm in}~~\widetilde{\Omega}_{1}, \\
\lambda k_{2}^{2},  \quad & {\rm in}~~\widetilde{\Omega}_{2},
\end{array}\right. \quad\quad \tilde{v}(r,\theta):=\left\{\begin{array}{ll}
v_{1},\quad {\rm in}~~\widetilde{\Omega}_{1}, \\
v_{2},\quad {\rm in}~~\widetilde{\Omega}_{2}.
\end{array}\right.
\end{equation*}
Then (\ref{rb}) is equivalent to
\begin{equation*}
\nabla\cdot(\tilde{a}(\theta)\nabla \tilde{v})+\tilde{\kappa}(\theta)\tilde{v}=0 \quad \mbox{in}~B_{R}.
\end{equation*}
By the decomposition theorem,  $\tilde{v}=\tilde{w}+\sum\limits_{j=1}^{m}\tilde{c}_{j}r^{\delta_{j}}\phi_{j}(\theta)(\ln r)^{\tilde{p}_{j}}$ in $B_{R}$ with $\tilde{p}_{j}\in\{0,1,\cdots\}$. Here, $\tilde{w}\in H^{2}(\widetilde{\Omega}_{\ell})$ ($\ell=1,2$), and $\delta_{j}\in(0,1)$ are eigenvalues of the following positive definite Sturm-Liouville:
\begin{equation} \label{b2}
\left\{\begin{array}{lll}
\phi_{j}^{''}(\theta)+\delta_{j}^{2}\phi_{j}(\theta)=0,\quad & \mbox{in} \quad \theta\in[0,\pi/2)\cup(\pi/2,3\pi/2)\cup(3\pi/2,2\pi],\vspace{0.2cm} \\
\phi_{j,+}(\pi/2)=\phi_{j,-}(\pi/2), \quad & \phi^{'}_{j,+}(\pi/2)=\lambda \phi_{j,-}^{'}(\pi/2),\vspace{0.2cm}\\
\phi_{j,+}(3\pi/2)=\phi_{j,-}(3\pi/2),\quad & \phi_{j,+}^{'}(3\pi/2)=\lambda \phi_{j,-}^{'}(3\pi/2).
\end{array}\right.
\end{equation}
Here, $\phi_{j,+}$, $\phi_{j,+}^{'}$ denote the limits from $\widetilde{\Omega}_{1}$ and $\phi_{j,-}$, $\phi_{j,-}^{'}$ the limits from $\widetilde{\Omega}_{2}$.

\begin{theorem} \label{Theo2}
The solution $\tilde{v}$ to (\ref{rb}) has the regularity $\tilde{v}\in H^{1+s}(B_{R})\cap H^{2}(\widetilde{\Omega}_{\ell})$ for any $0\leq s<1/2$, and $\tilde{v}$ is analytic on the closure of $\widetilde{\Omega}_{\ell}$ ($\ell=1,2$).
\end{theorem}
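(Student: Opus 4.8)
The plan is to mimic the proof of Theorem~\ref{Theo1}. Writing \eqref{rb} in divergence form $\nabla\cdot(\tilde a(\theta)\nabla\tilde v)+\tilde\kappa(\theta)\tilde v=0$ in $B_{R}$ and invoking the decomposition theorem, it suffices to show that the Sturm--Liouville problem \eqref{b2} has \emph{no} eigenvalue $\delta_{j}\in(0,1)$; then the singular sum $\sum_{j}\tilde c_{j}r^{\delta_{j}}\phi_{j}(\theta)(\ln r)^{\tilde p_{j}}$ is empty and $\tilde v=\tilde w\in H^{2}(\widetilde{\Omega}_{\ell})$ for $\ell=1,2$.

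To determine the eigenvalues I would write a general solution of $\phi''+\delta^{2}\phi=0$ as $A^{+}\cos(\delta\theta)+B^{+}\sin(\delta\theta)$ on $\widetilde{\Omega}_{1}$ (using $2\pi$-periodicity of $\theta$ to regard this as a single branch on $-\pi/2<\theta<\pi/2$) and $A^{-}\cos(\delta\theta)+B^{-}\sin(\delta\theta)$ on $\widetilde{\Omega}_{2}$, and impose the four transmission conditions at $\theta=\pi/2$ and $\theta=3\pi/2$, obtaining a homogeneous $4\times4$ system for $(A^{+},B^{+},A^{-},B^{-})$. Rather than expand this determinant head-on, I would exploit the reflection $\theta\mapsto-\theta$, which leaves $\widetilde{\Omega}_{\ell}$, $\tilde a$ and $\tilde\kappa$ invariant and swaps the two interface rays $\widetilde{\Pi}_{1},\widetilde{\Pi}_{2}$: taking sums and differences of the conditions at $\theta=\pi/2$ and $\theta=3\pi/2$ and splitting $\phi$ into parts even/odd about the $x_{1}$-axis block-diagonalises the system into two $2\times2$ blocks, one matching the even parts and one matching the odd parts at $\theta=\pi/2$. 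A short computation gives each block determinant as a nonzero multiple of $(\lambda+1)\sin(\delta\pi/2)\cos(\delta\pi/2)$, so the characteristic equation is, up to a nonvanishing factor, $(\lambda+1)^{2}\sin^{2}(\delta\pi)=0$, whose roots are exactly $\delta\in\{0,1,2,\dots\}$. In particular no eigenvalue lies in $(0,1)$, and the decomposition theorem yields $\tilde v\in H^{2}(\widetilde{\Omega}_{\ell})$.

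Granting this piecewise $H^{2}$ regularity, the global estimate $\tilde v\in H^{1+s}(B_{R})$ for every $s<1/2$ follows by a routine gluing argument: $\tilde v$ is continuous across the analytic arc $\widetilde{\Pi}_{1}\cup\widetilde{\Pi}_{2}$ (because $v_{1}=v_{2}$ there), while the jump of $\partial_{\nu}\tilde v$ across it lies in $H^{1/2}$ of the interface (trace of the $H^{2}$ pieces), which is compatible with $H^{3/2-\varepsilon}$ but, for $\lambda\neq1$, in general not with $H^{3/2}$.

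For the analyticity up to $\widetilde{\Pi}_{1}\cup\widetilde{\Pi}_{2}$, I would first note that $v_{\ell}$ is real-analytic in the interior of $\widetilde{\Omega}_{\ell}$ as a solution of a constant-coefficient elliptic equation. Up to the straight interface, since $\tilde a,\tilde\kappa$ are piecewise constant (hence piecewise analytic) and $\widetilde{\Pi}_{1}\cup\widetilde{\Pi}_{2}$ is an analytic arc containing no corner, $v_{\ell}$ is analytic up to $\widetilde{\Pi}_{1}\cup\widetilde{\Pi}_{2}$ from its side by the analytic elliptic regularity theory for transmission problems (cf. the references \cite{KMR,MNP,Pe2001} invoked for Proposition~\ref{prop}); equivalently, using the fact established above that every exponent in the decomposition is a non-negative integer, one may build a convergent power-series representation of $v_{\ell}$ near $O$ by a two-step recurrence on the coefficients analogous to \eqref{recur1}, whose geometric-type bounds give convergence on a small disk. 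I expect this analyticity to be the only genuinely delicate point: in contrast with the TE case, the two Helmholtz equations carry distinct wave numbers $k_{1}\neq k_{2}$, so the classical reflection principle across $\widetilde{\Pi}_{1}\cup\widetilde{\Pi}_{2}$ does not by itself produce a single elliptic equation, and one must argue analytic regularity for the coupled transmission system directly.
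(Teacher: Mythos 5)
Your proposal is correct and follows essentially the same route as the paper: both reduce the claim to showing that the Sturm--Liouville problem \eqref{b2} has no eigenvalue $\delta_j\in(0,1)$, and verify this by computing the determinant of the homogeneous $4\times4$ transmission system, obtaining a nonvanishing multiple of $\sin^{2}(\pi\delta)$ and hence $\delta\in\mathbb{N}$. The differences are cosmetic: you block-diagonalise via the reflection $\theta\mapsto-\theta$ rather than the paper's column operations (your prefactor $(\lambda+1)^{2}/4$ versus the paper's $(\lambda-1)^{2}$ reflects your different --- and arguably more careful --- single-branch parametrisation of $\phi$ on the connected arc $\widetilde{\Omega}_{1}$; both factors are nonzero under the standing assumptions), and you spell out the gluing argument for $H^{1+s}(B_{R})$ and the analyticity up to the flat interface, which the paper asserts without detail.
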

\begin{proof}
Write $\phi(\theta):=\phi_{j}(\theta)$, $\delta_{j}:=\delta$ for some fixed $j$. A general solution to (\ref{b2}) takes the form
\begin{equation*}
\phi(\theta)=\left\{\begin{array}{l}
\tilde{A}^{+}\cos(\delta\theta)+\tilde{B}^{+}\sin(\delta\theta),\quad \theta\in[0,\pi/2)\cup(3\pi/2,2\pi], \vspace{0.1cm}\\
\tilde{A}^{-}\cos(\delta\theta)+\tilde{B}^{-}\sin(\delta\theta),\quad \theta\in(\pi/2,3\pi/2).
\end{array}\right.
\end{equation*}
Using the transmission boundary conditions in (\ref{b2}) yields
\begin{equation*}
\left\{\begin{array}{l}
\tilde{A}^{+}\cos(\pi\delta/2)+\tilde{B}^{+}\sin(\pi\delta/2)=\tilde{A}^{-}\cos(\pi\delta/2)+\tilde{B}^{-}\sin(\pi\delta/2), \vspace{0.1cm}\\
\tilde{A}^{+}\cos(3\pi\delta/2)+\tilde{B}^{+}\sin(3\pi\delta/2)=\tilde{A}^{-}\cos(3\pi\delta/2)+\tilde{B}^{-}\sin(3\pi\delta/2).
\end{array}\right.
\end{equation*}
Since
\begin{equation*}
\phi^{'}(\theta)=\left\{\begin{array}{l}
-\delta \tilde{A}^{+}\sin(\delta\theta)+\delta \tilde{B}^{+}\cos(\delta\theta),\quad \theta\in[0,\pi/2)\cup(3\pi/2,2\pi], \vspace{0.1cm}\\
-\delta \tilde{A}^{-}\sin(\delta\theta)+\delta \tilde{B}^{-}\cos(\delta\theta),\quad \theta\in(\pi/2,3\pi/2),
\end{array}\right.
\end{equation*}
then we obtain that
\begin{equation*}
\left\{\begin{array}{l}
-\tilde{A}^{+}\sin(\pi\delta/2)+\tilde{B}^{+}\cos(\pi\delta/2)=\lambda[-\tilde{A}^{-}\sin(\pi\delta/2)+\tilde{B}^{-}\cos(\pi\delta/2)],\vspace{0.1cm}\\
-\tilde{A}^{+}\sin(3\pi\delta/2)+\tilde{B}^{+}\cos(3\pi\delta/2)=\lambda[-\tilde{A}^{-}\sin(3\pi\delta/2)+\tilde{B}^{-}\cos(3\pi\delta/2)].
\end{array}\right.
\end{equation*}
That is, $(\tilde{A}^{-},\tilde{B}^{-},\tilde{A}^{+},\tilde{B}^{+})$ satisfies the following equation system:
\begin{equation*}
\left(\begin{array}{cccc}
\cos(\pi\delta/2) & \sin(\pi\delta/2) & -\cos(\pi\delta/2) & -\sin(\pi\delta/2) \\
\cos(3\pi\delta/2) & \sin(3\pi\delta/2) & -\cos(3\pi\delta/2) & -\sin(3\pi\delta/2) \\
-\lambda\sin(\pi\delta/2) & \lambda\cos(\pi\delta/2)  &  \sin(\pi\delta/2) &  -\cos(\pi\delta/2) \\
-\lambda\sin(3\pi\delta/2) & \lambda\cos(3\pi\delta/2)  & \sin(3\pi\delta/2) & -\cos(3\pi\delta/2)
\end{array}\right)\left(\begin{array}{cccc}\tilde{A}^{-} \\ \tilde{B}^{-} \\ \tilde{A}^{+}\\ \tilde{B}^{+}\end{array}\right)
=\left(\begin{array}{cccc} 0\\ 0\\ 0\\ 0\end{array}\right).
\end{equation*}
We denote the fourth order matrix on the left by $\tilde{M}$. Then simple calculation shows that
\begin{align*}
|\tilde{M}|=&\left|\begin{array}{cccc}
\cos(\pi\delta/2) & \sin(\pi\delta/2)  & 0 & 0 \\
\cos(3\pi\delta/2) & \sin(3\pi\delta/2) & 0 & 0 \\
-\lambda\sin(\pi\delta/2) & \lambda\cos(\pi\delta/2) & (1-\lambda)\sin(\pi\delta/2) & (\lambda-1)\cos(\pi\delta/2) \\
-\lambda\sin(3\pi\delta/2) & \lambda\cos(3\pi\delta/2) & (1-\lambda)\sin(3\pi\delta/2) & (\lambda-1)\cos(3\pi\delta/2)
\end{array}\right|  \\
=&-(\lambda-1)^{2}\left|\begin{array}{cccc}
\cos(\pi\delta/2) & \sin(\pi\delta/2)  & 0 & 0 \\
\cos(3\pi\delta/2) & \sin(3\pi\delta/2) & 0 & 0 \\
0 & 0 & \sin(\pi\delta/2) & \cos(\pi\delta/2) \\
0 & 0 & \sin(3\pi\delta/2) & \cos(3\pi\delta/2)
\end{array}\right| \\
=&\,(\lambda-1)^{2}\sin^{2}(\pi\delta)=0.
\end{align*}
That is, $\sin(\pi\delta)=0$ and then $\delta\in\mathbb{N}$, which implies that $\tilde{v}\in H^{1}(B_{R})\cap H^{2}(\widetilde{\Omega}_{\ell})$ and $\tilde{v}$ is analytic up to the boundary of $\widetilde{\Pi}_{1}\cup\widetilde{\Pi}_{2}$. The proof is complete.
\end{proof}

\subsection{Uniqueness and existence of forward scattering problem} Define the DtN mapping $T: H_\alpha^{1/2}(\Gamma_H)\rightarrow H_\alpha^{-1/2}(\Gamma_H)$ by
\ben
(T f)(x_1):=\sum_{n\in \mathbb{Z}} i\,\beta_n f_n\,e^{i\alpha_n x_1},\qquad {\rm where}~~ f(x_1)=\sum_{n\in \mathbb{Z}} f_n\,e^{i\alpha_n x_1}\in H_\alpha^{1/2}(\Gamma_H).
\enn
Introduce the piecewise analytic functions
\ben
a(x):=\left\{\begin{array}{lll}
1&\mbox{in}\quad S_{H}^+,\\
\lambda & \mbox{in} \quad S_{H}^-,
\end{array}\right.\quad\quad
\kappa(x):=\left\{\begin{array}{lll}
k_{1}^2 & \mbox{in}\quad S_{H}^+,\\
\lambda\,k_{2}^2 &\mbox{in} \quad S_{H}^-.
\end{array}\right.
\enn
The scattering problem \eqref{a}--\eqref{rad1} can be equivalently formulated as the following divergence form in the truncated domain $S_H$:
\begin{equation} \label{model}
\left\{\begin{array}{lll}
\nabla\cdot(a(x)\nabla u)+\kappa(x)u=0,& \quad \mbox{in}\quad S_H, \vspace{0.1cm} \\ \partial_2 u=Tu+(\partial_2 u^i-Tu^i),&\quad\mbox{on}\quad \Gamma_H, \\
u=0, &\quad\mbox{on}\quad \Gamma_0.
\end{array}\right.
\end{equation}

\begin{theorem} \label{TH}
The boundary value problem \eqref{model} has at least one solution $u\in H^1_\alpha(S_H)$ for any fixed $H>\Lambda^+$. Moreover, uniqueness remains true for any $k_{1},k_{2}>0$ under the following monotonicity conditions on the medium:
\be \label{q}
k_{1}^{2}>\lambda k_{2}^{2}.
\en
\end{theorem}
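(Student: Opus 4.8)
The plan is to combine the standard variational reduction of \eqref{model} to the bounded cell $S_H$ with a G\aa rding inequality and the Fredholm alternative for existence, and to establish uniqueness under \eqref{q} by first eliminating the propagating modes through an energy identity and then invoking a Rellich-type identity that exploits the monotonicity of $\kappa$ encoded in \eqref{q}.

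\textbf{Variational setup and the Fredholm property.} Put $V:=\{u\in H^1_\alpha(S_H):u|_{\Gamma_0}=0\}$ and define the sesquilinear form
\[
\mathcal B(u,v):=\int_{S_H}\big(a\,\nabla u\cdot\overline{\nabla v}-\kappa\,u\,\overline v\big)\,dx-\int_{\Gamma_H}(Tu)\,\overline v\,ds,\qquad u,v\in V,
\]
so that $u\in V$ solves \eqref{model} weakly iff $\mathcal B(u,v)=\int_{\Gamma_H}(\partial_2u^i-Tu^i)\,\overline v\,ds$ for all $v\in V$. I would decompose $T=T_{\rm ev}+T_{\rm pr}$, where $T_{\rm pr}$ is the finite-rank sum over propagating indices $|\alpha_n|\le k_1$ and $T_{\rm ev}$ the sum over $|\alpha_n|>k_1$, for which $i\beta_n=-\sqrt{\alpha_n^2-k_1^2}<0$. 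Then $-\mathrm{Re}\,\langle T_{\rm ev}u,u\rangle_{\Gamma_H}\ge0$, and since $u|_{\Gamma_0}=0$ the Poincar\'e inequality gives $\int_{S_H}a\,|\nabla u|^2\ge c\,\|u\|_{H^1(S_H)}^2$, while $u\mapsto\int_{S_H}\kappa|u|^2$ and $u\mapsto\langle T_{\rm pr}u,u\rangle_{\Gamma_H}$ are compact quadratic forms on $V$ (Rellich compactness and the smoothing of a finite Fourier truncation). Hence $\mathrm{Re}\,\mathcal B(u,u)\ge c\,\|u\|_{H^1(S_H)}^2-(\text{compact form})$, so the operator $\mathcal A:V\to V^\ast$ induced by $\mathcal B$ is Fredholm of index zero.

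\textbf{Existence.} By the Fredholm alternative it suffices to check that the right-hand side functional $F(v)=\int_{\Gamma_H}(\partial_2u^i-Tu^i)\overline v\,ds$ annihilates $\ker\mathcal A^\ast$. A direct computation gives $\partial_2u^i-Tu^i=-2i\beta\,e^{-i\beta H}e^{i\alpha x_1}$ on $\Gamma_H$, so $F$ is carried by the single Fourier mode with $\alpha_n=\alpha$, which is propagating since $|\alpha|<k_1$ for $\theta\in(-\pi/2,\pi/2)$. On the other hand, if $\mathcal A^\ast u_0=0$ then $\mathcal B(u_0,u_0)=0$, and taking imaginary parts (the coefficients $a,\kappa$ being real) yields $\sum_{|\alpha_n|\le k_1}\beta_n|u_{0,n}|^2=0$, so the propagating Fourier coefficients of $u_0$ on $\Gamma_H$ all vanish. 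Therefore $F(u_0)=0$ and \eqref{model} is solvable; alternatively, unconditional solvability of the diffraction problem is available in \cite{Bao2001,BL,BS,ES98}.

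\textbf{Uniqueness under \eqref{q}.} Let $u\in H^1_\alpha(S_H)$ solve the homogeneous problem. Exactly as above, $\mathrm{Im}\,\mathcal B(u,u)=0$ forces $\beta_n|A_n|^2=0$ for all propagating $n$, so every propagating Rayleigh coefficient vanishes and $u$ decays exponentially as $x_2\to+\infty$; in particular $u$ lies in $H^1$ of the semi-infinite periodicity cell $S_\infty=\{x_2>0\}$ and still satisfies $u|_{\Gamma_0}=0$ and the transmission conditions on $\Lambda$. Testing the homogeneous problem with $u$ and using the transmission conditions gives the energy identity
\[
\int_{\Omega^+_\Lambda}\big(|\nabla u|^2-k_1^2|u|^2\big)\,dx+\lambda\int_{\Omega^-_\Lambda}\big(|\nabla u|^2-k_2^2|u|^2\big)\,dx=0 .
\]
Next I would multiply $\nabla\cdot(a\nabla u)+\kappa u=0$ by $x_2\,\overline{\partial_2 u}$ (more generally by $(x-z)\cdot\overline{\nabla u}$ for a suitably placed $z$), integrate over $S_\infty$ and take real parts. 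The contribution at $x_2=+\infty$ vanishes by the exponential decay, the $\Gamma_0$-contribution vanishes because both $u$ and its tangential derivative vanish there, and the lateral contributions cancel by $\alpha$-quasi-periodicity; what survives is a volume term together with an integral over $\Lambda$ carrying the jumps of $a$ and $\kappa$. Using $u^+=u^-$ and $\partial_\nu^+u=\lambda\partial_\nu^-u$ on the horizontal and vertical segments of $\Lambda$, that interface term reduces to a fixed positive multiple of $k_1^2-\lambda k_2^2$ times a nonnegative boundary integral, hence is sign-definite precisely under \eqref{q} (which says that $\kappa$ is nondecreasing across $\Lambda$ in the direction of increasing $x_2$). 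Combining this with the energy identity above forces all terms to vanish; in particular $u$ and $\partial_\nu u$ vanish on $\Lambda$, and Holmgren's uniqueness theorem yields $u\equiv0$ in $S_H$.

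\textbf{Main obstacle.} The hardest point is making the Rellich identity rigorous at the corners of $\Lambda$: by Proposition \ref{prop}(ii) the solution is only $H^{1+s}$ with $s<1/2$ there, so $x_2\partial_2 u$ and the boundary integrands are a priori not defined at the corner points, and the integration by parts must be carried out on $S_\infty$ with small disks around each corner excised, checking that the corner contributions disappear as the radii shrink; this uses the sharp corner exponent $\eta_1>2/3$ of \eqref{eta1}. Keeping track of the horizontal versus vertical pieces of the rectangular interface, and reading off the correct sign of the jump term from \eqref{q}, is the remaining delicate bookkeeping.
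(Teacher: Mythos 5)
Your existence argument coincides with the paper's: the same variational form, the same splitting of $T$ into its propagating and evanescent parts via \eqref{re}, a G{\aa}rding inequality plus Fredholm alternative, and solvability checked by showing that elements of the adjoint kernel have vanishing propagating Fourier coefficients on $\Gamma_H$. The overall strategy for uniqueness (kill the propagating modes by taking imaginary parts, then apply a Rellich identity with multiplier $x_2\,\partial_2\overline{u}$, exploiting $\nu_2\ge 0$ on a rectangular interface) is also the paper's. However, there is a genuine gap in your treatment of the interface term. Because $a$ jumps across $\Lambda$ (i.e.\ $\lambda\neq 1$), combining the Rellich identities on $S_H^{+}$ and $S_H^{-}$ in the form $I^{+}+\lambda I^{-}$ produces on $\Lambda$ not only the potential-jump term $(k_1^2-\lambda k_2^2)|u|^2$ but also the gradient-jump terms $\lambda(\lambda-1)|\partial_\nu u^{-}|^2+(\lambda-1)|\partial_\tau u^{-}|^2$; see \eqref{identity}. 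Your claim that the interface contribution ``reduces to a fixed positive multiple of $k_1^2-\lambda k_2^2$ times a nonnegative boundary integral'' drops exactly these terms, which are present only because $\lambda\neq1$ and whose signs are governed by $\operatorname{sign}(\lambda-1)$ rather than by \eqref{q}. (What you describe is the TE computation of Part I, where $\lambda=1$ and only the $\kappa$-jump survives.) As written, the sign-definiteness of the interface term, and hence the conclusion that every term in the identity vanishes, does not follow from \eqref{q} alone.

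Two smaller points. First, even granting sign-definiteness, the integrand on $\Lambda$ carries the weight $\nu_2 x_2$, which vanishes on all vertical segments; you therefore only learn that the Cauchy data vanish on the horizontal pieces, so ``$u$ and $\partial_\nu u$ vanish on $\Lambda$'' overstates what the identity gives (it still suffices for Holmgren, but the claim should be localized). Second, the paper closes the argument differently: from $\partial_2 u\equiv 0$ in $S_H$ and $I_1=0$ it deduces $u_n=0$ for all $|\alpha_n|>k_1$, leaving only possible Rayleigh-frequency modes $A_ne^{ik_1x_1}+A_me^{-ik_1x_1}$, which are then eliminated using the vanishing of $u$ on a horizontal segment of $\Lambda$; your Holmgren route would be an acceptable alternative ending if the interface term were under control. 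Your remark about justifying the boundary integrals at the corners is on target; the paper handles it by citing $u\in H^{3/2+\epsilon}_\alpha(S_H^\pm)$ for rectangular interfaces rather than by excising disks.
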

\begin{proof}
From the definition of $T$, it follows that for $f\in H^{1/2}_\alpha(\Gamma_H)$,
\be  \label{re}
\mbox{Re}\,\langle T\,f, f\rangle=-\sum_{ |\alpha_n|>k_{1}} |\beta_n|\, |f_n|^2\leq 0,\quad \mbox{Im}\,\langle T\,f, f\rangle=\sum_{ |\alpha_n|\leq k_{1}} |\beta_n\, |f_n|^2\geq 0,
\en
where the pair $\langle \cdot, \cdot\rangle$ denotes the duality between $H_\alpha^{-1/2}$ and $H_\alpha^{1/2}$ on $\Gamma_{H}$. The variational formulation for \eqref{model} can be written as: find $u\in H_\alpha^1(S_H)$ such that for all $v\in H_\alpha^1(S_H)$,
\be  \label{va}
\quad\quad L(u, v):=\int_{S_H} \left[a(x) \nabla u\cdot \nabla \overline{v}- a(x)\kappa(x) u\overline{v}\right]\,\mbox{d}x-\int_{\Gamma_H} Tu \overline{v}\,\mbox{d}s=\int_{\Gamma_H}\left(T u^i-\frac{\partial u^i}{\partial x_2}\right)\overline{v}\,\mbox{d}s.
\en
Using $\eqref{re}$, one can conclude that the above sesquilinear form gives rise to a strongly elliptic operator $\mathcal{L}$ such that $L(u, v)=\langle \mathcal{L}u, v\rangle$ for all  $u, v\in H_\alpha^{1/2}(S_H)$ (see also e.g., \cite{D93, ES98}), where $\langle\cdot, \cdot\rangle$ denotes the inner product over the Hilbert space $H^1_\alpha(S_H)$.  On the other hand, the adjoint of $\mathcal{L}$: $H^1_\alpha(S_H)\rightarrow H^1_\alpha(S_H)$ takes the explicit form
\ben
\langle \mathcal{L}^*u, v\rangle=\overline{L(v, u)}=\int_{S_H} \left[a(x) \nabla u\cdot \nabla \overline{v}- a(x) \kappa(x) u\overline{v}\right]\,\mbox{d}x+2\pi\sum_{n\in \Z} i\overline{\beta_n} u_n \overline{v}_n,\quad u, v\in H^1_\alpha(S_H).
\enn
Here, $u_n$ and $v_n$ denote the Fourier coefficients of $e^{-i\alpha x_1}u|_{\Gamma_H}$ and $e^{-i\alpha x_1}v|_{\Gamma_H}$, respectively. Taking the imaginary part on both sides of the previous identity with $v=u$ and using \eqref{re}, we get $\sum\limits_{|\alpha_n|\leq k_{1}} |\beta_n|\, |u_n|^2=0$ for $u\in {\rm Ker}(L^*)$. This implies that
\ben
\int_{\Gamma_H}\left(Tu^i-\frac{\partial u^i}{\partial x_2}\right)\overline{v}\,\mbox{d}s=0\quad\mbox{for all}\quad v\in {\rm Ker}(\mathcal{L}^*).
\enn
By Fredholm alternative, there always exists a solution $u\in H^1_\alpha(S_H)$ to \eqref{model}.

To prove uniqueness, we suppose that $u^i\equiv 0$. Then $u$ satisfies the upward Rayleigh expansion radiation condition. Taking the real part on both sides of \eqref{va} with $v=u$ and $u^i=0$ and using \eqref{re}, we obtain
\ben
I_1:=\int_{S_H} \left[a(x) |\nabla u|^2- a(x) \kappa (x) |u|^2\right]\,\mbox{d}x
= -\sum_{|\alpha_n|> k_{1}} |\beta_n|\, |u_n|^2\, e^{-2|\beta_n|\,H} \leq 0.
\enn
Multiplying the Helmholtz equation by $x_2\,\partial_2\overline{u}$ and integrating by part over $S_{H}^\pm$ yield the Rellich's identities:
\ben
I^+&=&\left(\int_{\Gamma_H}-\int_\Lambda \right) x_2\left[
-\nu_2 |\nabla u |^2+\nu_2 k_{1}^2 |u|^2+2\mbox{Re}(\partial_2 \overline{u^+}\,\partial_\nu u^+)
\right]\,\mbox{d}s \\ \nonumber
&&+\int_{S_{H}^+} |\nabla u|^2-k_{1}^2\,|u|^2-2|\partial_2 u|^2\, \mbox{d}x=0,\\
I^-&=&\int_\Lambda x_2\left[
-\nu_2 |\nabla u|^2+\nu_2 k_{2}^2|u|^2+2\mbox{Re}(\partial_2 \overline{u^{-}}\,\partial_\nu u^-)\right]\,\mbox{d}s\\
&&+\int_{S_{H}^-} |\nabla u|^2-k_{1}^2\,|u|^2-2|\partial_2 u|^2\, \mbox{d}x
=0.   
\enn
The integrand over $\Lambda$ is well-defined because, for rectangular gratings it holds that $u\in H^{3/2+\epsilon}_\alpha(S_{H}^\pm)$ for some $\epsilon>0$ depending on $\lambda$ (see e.g., \cite[Chapter 2.4.3]{Pe2001} and \cite[Section 3.3]{ES98}). Straightforward calculations show that
\ben
\int_{\Gamma_H} x_2\left[
-\nu_2 |\nabla u |^2+\nu_2 k_{1}^2 |u|^2+2\mbox{Re}(\partial_2 \overline{u}\,\partial_\nu u)
\right]\,\mbox{d}s=H\sum_{ |\alpha_n|\leq k_{1}} |\beta_n|\, |u_n|^2
= 0,
\enn
and
\be\nonumber
0&=&I^++\lambda\, I^- \\ \nonumber
&=&-\int_\Lambda \left[ \lambda (\lambda-1)|\partial_\nu u^-|^2+(\lambda-1)|\partial_\tau u^-|^2+(k_{1}^2-\lambda k_{2}^{2})|u|^2 \right] \nu_2 x_2\,\mbox{d}s-2\int_{S_H}a(x) |\partial_2 u|^2\,\mbox{d}x+ I_1, \label{identity} 
\en    
where $\partial_\tau$ denotes the tangential derivative on $\Lambda$ with $\tau:=(-\nu_2, \nu_1)$. By the assumptions \eqref{q} on $k_{1},k_{2}$ and recalling the fact that $\nu_2\geq 0$ on $\Lambda$, we conclude that the integral over $\Lambda$ is non-positive, so that each term in the above expression vanishes. Consequently, we get $\partial_2 u\equiv 0$ in $S_H$ and $I_1=0$, implying that $u_{n}=0$ for all $|\alpha_n|>k_{1}$. Therefore,
\ben
u=A_n e^{ik_{1}x_1}+ A_m\,e^{-ik_{1}x_1}\quad\mbox{in}\quad \Omega_\Lambda^+,\qquad A_n, A_m\in \mathbb{C},
\enn
if $\alpha_n=k_{1}$ or $\alpha_m=-k_{1}$ for some $n,m\in \mathbb{Z}$ (that is, Rayleigh frequencies occurs). Note that the above expression of $u$ is well-defined in $\mathbb{R}^2$. Since $\nu_2=1$ on the line segment of $\Lambda$ parallel to the $x_1$-axis and $k_{1}^{2}>\lambda k_{2}^{2}$, one can also deduce from \eqref{identity} that $u \equiv 0$ on this segment, which gives $A_n=A_m=0$ and thus $u\equiv 0$.
\end{proof}

\vspace{0.3cm}
{\bf Acknowledgments.}

The work of G. Hu is partially supported by the National Natural Science Foundation of China (No. 12071236) and the Fundamental Research Funds for Central Universities in China (No. 63213025). The work of J. Xiang is supported by the Natural Science Foundation of Hubei (No. 2022CFB725).


\begin{thebibliography}{<num>}



\bibitem{Bao2001}G. Bao, L. Cowsar and W. Masters. Mathematical Modeling in Optical Science. Philadelphia, USA: SIAM, 2001.

\bibitem{BL}G. Bao and P. Li. Maxwell's Equations in Periodic Structures, Springer, Singapore, 2022.


\bibitem{BFI} H. Bellout, A. Friedman and V. Isakov. Stability for an inverse problem in potential theory, Trans. Amer. Math. Soc., 332 (1992): 271-296.

\bibitem{BS} A. S. Bonnet-Bendhia and F. Starling. Guided waves by electromagnetic gratings and non-uniqueness examples for the diffraction problem. Math. Methods Appl. Sci., 17 (1994): 305-338.




\bibitem{D93} D. C. Dobson. Optimal design of periodic antireflective structures for the Helmholtz equation, European J. Appl. Math., 4 (1993): 321-340.


\bibitem{Elschner2018} J. Elschner and G. Hu. Acoustic scattering from corners, edges and circular cones, Archive for Rational Mechanics and Analysis, 228 (2018): 653-690.

\bibitem{ElschnerJ2015} J. Elschner and G. Hu. Corners and edges always scatter, Inverse Probl., 31 (2015): 015003.


\bibitem{EHY} J. Elschner, G. Hu and M. Yamamoto. Uniqueness in inverse elastic scattering from unbounded rigid surfaces of rectangular type, Inverse Problems and Imaging 9 (2015): 127-141.



\bibitem{ES98} J. Elschner and G. Schmidt. Diffraction in periodic structures and optimal design of binary gratings. I. Direct problems and gradient formulas, Math. Methods Appl. Sci., 21 (1998): 1297-1342.

\bibitem{Hettlich1997} F. Hettlich and A. Kirsch. Schiffer's theorem in inverse scattering for periodic structures, Inverse Probl., 13 (1997): 351-361.



\bibitem{XH} J.Xiang and G.Hu. Uniqueness in determining rectangular grating proles with a single incoming wave (Part I): TE polarization case, Inverse Problem 39 (2023): 055004.


\bibitem{HK22} G. Hu and A. Kirsch. Direct and inverse time-harmonic scattering by Dirichlet periodic curves with local perturbations, to appear.


\bibitem{Kirsch1994} A. Kirsch. Uniqueness theorems in inverse scattering theory for periodic structures, Inverse Probl., 10 (1994): 145-152.





\bibitem{K1967} V. A. Kondratiev. Boundary value problems for elliptic equations in domains with conical or angular points, Trans. Moscow Math. Soc., 16 (1967): 227-313.

\bibitem{KMR} V. A. Kozlov, V. G. Maz'ya and J. Rossmann. Elliptic Boundary Value Problems in Domains with Point Singularities, American Mathematical Society, Providence, RI, 1997.


\bibitem{LHY} L. Li, G. Hu and J. Yang. Piecewise-analytic interfaces with weakly singular points of arbitrary order always scatter, Journal of Functional Analysis, 284 (2023): 109800.

\bibitem{Ray1907} J. W. S. Lord Rayleigh. On the dynamical theory of gratings, Proc. Roy. Soc. Lond. A, 79 (1907): 399-416.

\bibitem{MNP} V. G. Maz'ya, S. A. Nazarov and B. A. Plamenevskii. Asymptotic Theory of Elliptic Boundary Value Problems in Singularly Perturbed Domains I, Birkh auser-Verlag, Basel, 2000.



\bibitem{Petit1980} R. Petit. Electromagnetic Theory of Gratings (Topics in Current Physics vol 22), (Heidelberg: Springer), 1980.

\bibitem{Pe2001} M. Petzoldt. Regularity and error estimators for elliptic problems with discontinuous coefficients, PhD Thesis, Berlin: Free University, 2001. Available online at:  http://www.diss.fu-berlin.de/diss

\bibitem{Pe2005} Petzoldt M. Regularity results for interface problems in 2D, WIAS Preprint No. 565 (2000), DOI: 10.20347/WIAS.PREPRINT.565

\bibitem{SK97} B. Schnabel and E. B. Kley. Fabrication and application of subwavelength gratings, Proc. SPIE, 3008 (1997): 233-241.




\bibitem{Turunen1997} J. Turunen and F. Wyrowski. Diffractive Optics for Industrial and Commercial Applications, Berlin: Akademie, 1997.



\bibitem{XH23}X. Xu, G. Hu, B. Zhang and H. Zhang. Uniqueness in inverse diffraction grating problems with infinitely many plane waves at a fixed frequency, SIAM J. Appl. Math 83 (2023): 302-326.
\end{thebibliography}
\end{document}